\documentclass[lettersize,journal]{IEEEtran}
\usepackage{amsmath,amsfonts,bm}
\usepackage{amsthm}
\usepackage{array}
\usepackage[caption=false,font=normalsize,labelfont=sf,textfont=sf]{subfig}
\captionsetup[subfloat]{labelformat=empty}
\usepackage{multirow}
\usepackage{textcomp}
\usepackage{url}
\usepackage{verbatim}
\usepackage{graphicx}
\usepackage{xcolor}
\usepackage{colortbl}
\usepackage{algorithmicx,algpseudocode,threeparttable,booktabs}
\hyphenation{op-tical net-works semi-conduc-tor IEEE-Xplore}
\def\BibTeX{{\rm B\kern-.05em{\sc i\kern-.025em b}\kern-.08em
    T\kern-.1667em\lower.7ex\hbox{E}\kern-.125emX}}
\usepackage{balance}

\DeclareMathOperator*{\argmin}{arg\,min}
\newtheorem{theorem}{Theorem}
\newtheorem{remark}{Remark}
\captionsetup[subfloat]{font=scriptsize}
\hyphenpenalty=10000

\begin{document}

\title{Efficient State Transition Algorithm With Guaranteed Optimality}

\author{Xiaojun Zhou, ~\IEEEmembership{Senior Member}, Chunhua Yang, ~\IEEEmembership{Fellow,~IEEE,}
	Weihua Gui, and Tingwen Huang, ~\IEEEmembership{Fellow,~IEEE}
	}



\maketitle

\begin{abstract}
The state transition algorithm (STA), as an intelligent optimization method grounded in constructivist learning, has been demonstrated to be highly effective in solving complex optimization problems. However, the standard STA suffers from slow convergence, particularly in the later stages when dealing with flat landscapes. Additionally, users are required to set the maximum number of iterations  based on intuition. To address these issues, an enhanced STA with guaranteed optimality is introduced. This improvement involves three key components. First, novel translation transformations, inspired by predictive modeling, are developed to generate a broader set of candidate solutions by leveraging historical data. Second, adaptive parameter control strategies are incorporated to accelerate convergence. Finally, a dedicated termination condition is designed to ensure that the algorithm converges at the optimal solution, analogous to the zero gradient condition in mathematical programming. The comprehensive experimental results validate the effectiveness and superiority of the proposed method.
The source codes for ESTA and EXSTA will be publicly available at: https://github.com/tiezhongyu2005/ESTA.
\end{abstract}

\begin{IEEEkeywords}
Constructivist learning, intelligent optimization, guaranteed optimality, state transition algorithm, termination condition.
\end{IEEEkeywords}

\section{Introduction}
\IEEEPARstart{M}{odern} systems engineering problems often involve complex optimization tasks spanning multiple stages of the system lifecycle, including design, analysis, and deployment. These problems are inherently characterized by nonlinearity, nonconvexity, high dimensionality, and uncertainty, thereby necessitating optimization methods that are both efficient and reliable to ensure satisfactory system performance. Intelligent optimization algorithms have been widely adopted as practical tools for solving complex optimization problems. However, most existing approaches, particularly those based on swarm intelligence and imitation learning, lack rigorous theoretical guarantees in terms of optimality, convergence, and controllability, thereby undermining their reliability in critical systems engineering applications.

The state transition algorithm (STA) is a constructivist learning-based intelligent optimization approach developed in recent years \cite{zhou2019statistical,du2025broad,dong2026learning}.
In STA, each solution to an optimization problem is metaphorically treated as a state, and its iterative update is correspondingly defined as a state transition.
Unlike the majority of swarm intelligence optimization algorithms based on imitation learning and behavioral cloning \cite{tang2021review}, the STA is inspired by constructivist learning \cite{supena2021influence}, which acquires the essential knowledge and constructs the necessary components, including
globality, optimality, rapidity, convergence and controllability, aiming to find a global or near-global optimal solution as quickly as possible.
In standard STA \cite{zhou2012state}, a set of diverse state transformation operators is specifically designed to generate candidate solutions to achieve this goal. Specifically, the expansion transformation facilitates global search to ensure globality, while the rotation transformation supports local search to enhance local optimality. Additionally, translation and axesion transformations are employed for heuristic search to improve rapidity. Furthermore, appropriate update strategies are implemented to ensure convergence. Moreover, each state transformation operator generates a regular neighborhood with an adjustable size, providing controllability. Due to its ease of understanding, convenience of use, as well as powerful search ability, it has been successfully applied in a wide range of practical engineering problems \cite{zhou2025interpretable, li2026hybrid, chen2026knowledge, zhou2026evolutionary}.

However, on the one hand,  the standard STA tends to exhibit slow convergence in the later stages, particularly when dealing with flat landscapes, due to the use of fixed parameter settings. Many efforts have been made to address this issue. In \cite{zhou2018dynamic}, all state transformation factors are decreased periodically to enhance local exploitation in the later stages. In \cite{zhou2019statistical}, an optimized strategy for parameter selection was proposed to accelerate convergence; however, it necessitates some time to initiate the acceleration process.
In \cite{dong2022adaptive}, gradient information is incorporated to accelerate the search process in the later stages; nevertheless, it requires the function to be differentiable.  The question of how to integrate appropriate mechanisms to accelerate convergence in STA for a general function requires further investigation.

On the other hand, the maximum number of iterations in standard STA is manually specified as a termination criterion, typically based on prior experience, which is common practice in intelligent optimization algorithms. As is well known, for an intelligent optimization algorithm, besides generating new candidates and updating incumbents, the termination condition is also a crucial component.
In mathematical programming with continuously differentiable objective function, a widely used termination condition is the zero gradient condition, as it guarantees the
optimality due to Fermat's theorem. However, for the majority of existing intelligent optimization algorithms, it is challenging to design an automatic termination criterion to guarantee the
optimality \cite{cai2024multiselection, fu2026hybrid, gao2026multiknowledge}. The simplest and most widely used termination criterion in intelligent optimization algorithms is a predefined number of function evaluations (or a maximum number of iterations). However, determining this value is highly problem-dependent and often relies on empirical tuning. This presents a tuning dilemma: an overly low value may fail to reach the optimum, whereas an excessively high one can lead to unnecessary computational overhead. What's more, using the maximum number of iterations as a stopping criterion can be detrimental, and how to set a good termination condition in intelligent optimization algorithms still remains unresolved and require further investigation \cite{ravber2022maximum}. Nevertheless, the issue of how to design an appropriate termination criterion to ensure optimality is often overlooked in studies on intelligent optimization, including previous research on STA. In earlier versions of STA, the termination criterion was also based on a predefined maximum number of iterations, which must be specified in advance and typically relies on expert knowledge. Notably, the intrinsic characteristics in STA, particularly the rotation transformation that enables the search within a hypersphere of a specified radius, make it possible to set a dedicated termination condition that can guarantee the optimality automatically.

To address the aforementioned issues, this study aims to develop a set of efficient STA variants that not only achieves faster convergence on flat landscapes but also guarantees optimality through an integrated automatic termination mechanism, which is a crucial yet often overlooked aspect in most existing studies on intelligent optimization algorithms. The main contributions and innovations presented in this paper are summarized as below.
\begin{enumerate}
  \item Novel translation transformations based on predictive modeling are developed to generate a larger pool of potential candidates and accelerate the convergence process.
  \item Parameter control strategies are incorporated into STA to promote faster convergence.
  \item Specific termination criteria are designed to provide theoretical guarantees of optimality.
\end{enumerate}

The rest of this paper is organized as follows. Section II reviews related work, including fundamental principles of STA and commonly used termination conditions in intelligent optimization. Section III provides details of the proposed STA, including the novel state transformations, parameter control strategies, and dedicated termination criteria. In Section IV, comprehensive experimental results and analyses are given to demonstrate the effectiveness and superiority of the proposed STA. Finally, Section V concludes the paper.

\section{Related Work}

In this study, the following general unconstrained optimization problem is considered:
\begin{eqnarray}
\min_{\bm x \in \Omega} f(\bm x),
\end{eqnarray}
where $f(\bm x)$ denotes a continuous function with a lower bound and $\Omega \subseteq \mathbb{R}^n$ is a set that is both closed and compact.

\subsection{Basics of STA}

The STA is an intelligent optimization method grounded in constructivist learning,  based on states and state transitions. Given the current state $\bm s_k$, the unified form for generating the candidate state $\bm s_{k+1}$ in STA is formulated as below:
\begin{equation}
\label{sta_framework}
\bm s_{k+1}= A_{k} \bm s_{k} + B_{k} \bm u_{k},
\end{equation}
where, $\bm s_{k}$ represents a state corresponding to a candidate solution of the optimization problem for the $k$th iteration; $\bm u_{k}$ can be treated either as a function of current and historical states or independently defined;
$A_{k}$ and
$B_{k}$, serve as state transition matrices,  which, together with the update rule in Eq. (\ref{sta_framework}), constitute the core state transformation operators.

Drawing on state-space representation as inspiration, four specialized state transformation operators are elaborately designed to facilitate the solution search for the optimization problem.\\
(1) Rotation transformation (RT)
\begin{equation}
\bm s_{k+1}= \bm s_{k} + \alpha R_r \frac{\bm u_k}{\|\bm u_k\|_2},
\end{equation}
where $\alpha > 0$ is the rotation factor;
$R_r \in \mathbb{R}$ is a uniformly distributed random variable defined over the interval [-1,1],
and $\bm u_k \in \mathbb{R}^n$ is a random vector whose entries are uniformly distributed random variables within the interval [-1,1]. This rotation transformation enables the search within a hypersphere with the maximum radius $\alpha$ (the detailed proof can be found in \cite{zhou2018dynamic}), as illustrated in Figs.~\ref{fig:state_transformations}\subref{fig:rotation_plot} and \ref{fig:state_transformations}\subref{fig:rotation_hist}.
\\
(2) Translation transformation (TT)\\
\begin{equation}
\bm s_{k+1} = \bm s_{k}+  \beta  R_{t}  \frac{\bm s_{k}- \bm s_{k-1}}{\|\bm s_{k}- \bm s_{k-1}\|_{2}},
\end{equation}
where $\beta > 0 $ is the translation factor, and $R_t \in \mathbb{R}$ is a uniformly distributed random variable over the interval $[0,1]$.
The translation transformation enables the search along the direction of $\bm s_k - \bm s_{k-1}$,  with a maximum step length of $\beta$, as illustrated in
Figs.~\ref{fig:state_transformations}\subref{fig:translation_plot} and \ref{fig:state_transformations}\subref{fig:translation_hist}.
\\
(3) Expansion transformation (ET)\\
\begin{equation}
\bm s_{k+1} = \bm s_{k}+  \gamma  R_{e} \bm s_{k},
\end{equation}
where $\gamma > 0$ is the expansion factor, and $R_{e} \in \mathbb{R}^{n \times n}$ is a random diagonal matrix whose entries follow the normal (Gaussian) distribution, which expands the entries of $\bm{s}_{k}$ to $(-\infty, +\infty)$ to enable a search over the whole space,
as illustrated in Figs.~\ref{fig:state_transformations}\subref{fig:expansion_plot} and \ref{fig:state_transformations}\subref{fig:expansion_hist}.
\\
(4) Axesion transformation (AT)\\
\begin{equation}
\bm s_{k+1} = \bm s_{k}+  \delta  R_{a}  \bm s_{k},\\
\end{equation}
where $\delta > 0$ is the axesion factor, and $R_{a} \in \mathbb{R}^{n \times n}$ is a diagonal matrix in which only one diagonal entry is nonzero and is drawn from a normal (Gaussian) distribution. This axesion transformation is designed to conduct a one-dimensional search along the coordinate axes, thereby enhancing the search capability in a single dimension, as illustrated in
Figs.~\ref{fig:state_transformations}\subref{fig:axesion_plot} and \ref{fig:state_transformations}\subref{fig:axesion_hist}.

\begin{figure*}[htbp]
	\centering
	\subfloat[(a)]{\includegraphics[width=0.35\textwidth]{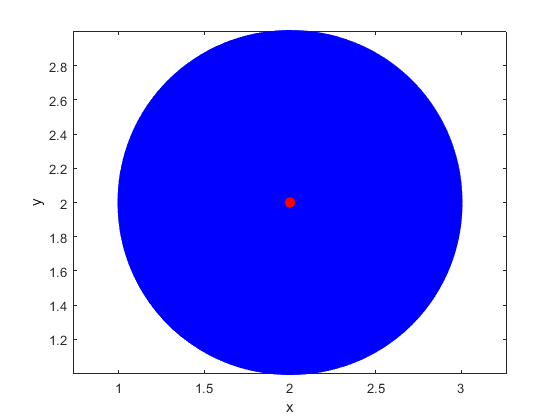}\label{fig:rotation_plot}}\hspace{0.1\textwidth}
	\subfloat[(b)]{\includegraphics[width=0.35\textwidth]{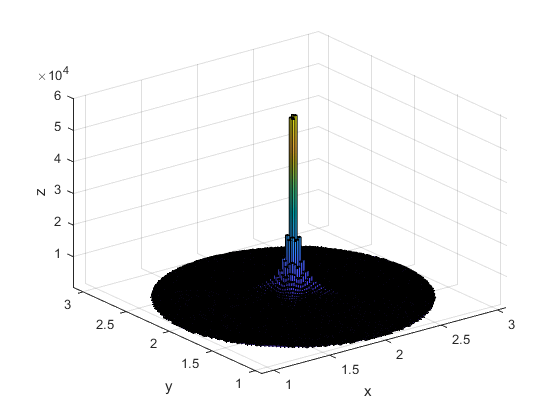}\label{fig:rotation_hist}} \\
	\subfloat[(c)]{\includegraphics[width=0.35\textwidth]{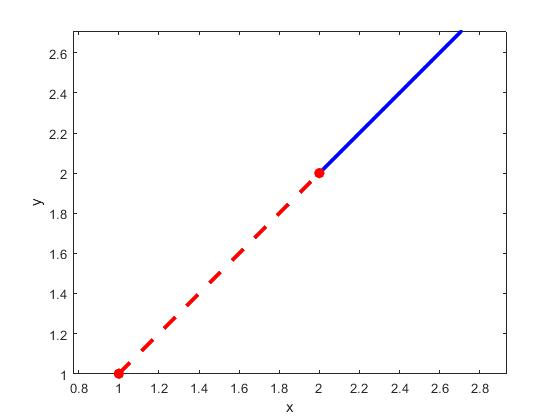}\label{fig:translation_plot}}\hspace{0.1\textwidth}
	\subfloat[(d)]{\includegraphics[width=0.35\textwidth]{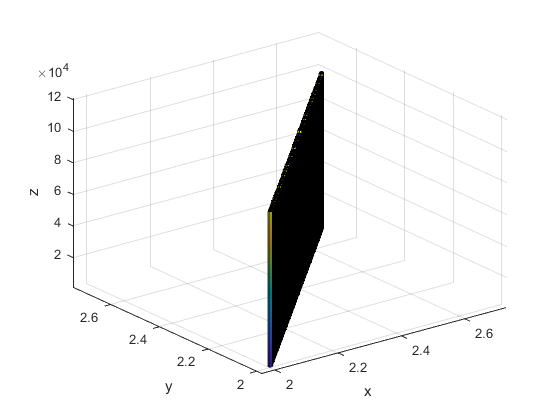}\label{fig:translation_hist}}\\
	\subfloat[(e)]{\includegraphics[width=0.35\textwidth]{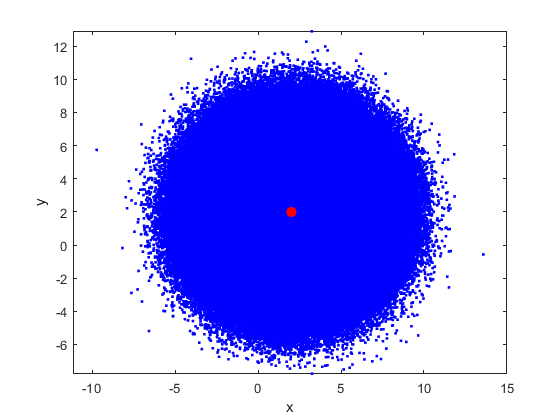}\label{fig:expansion_plot}}\hspace{0.1\textwidth}
	\subfloat[(f)]{\includegraphics[width=0.35\textwidth]{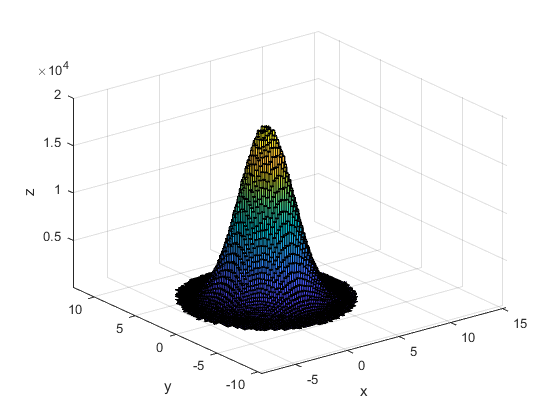}\label{fig:expansion_hist}}\\
	\subfloat[(g)]{\includegraphics[width=0.35\textwidth]{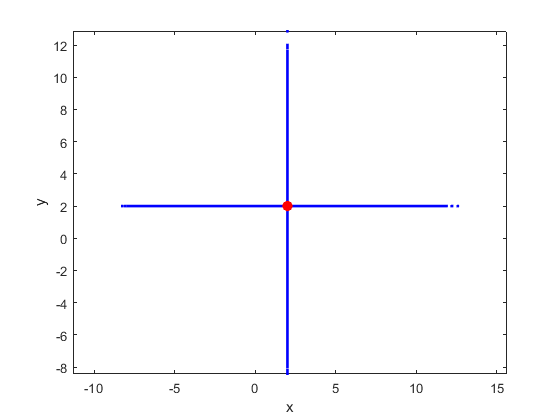}\label{fig:axesion_plot}}\hspace{0.1\textwidth}
	\subfloat[(h)]{\includegraphics[width=0.35\textwidth]{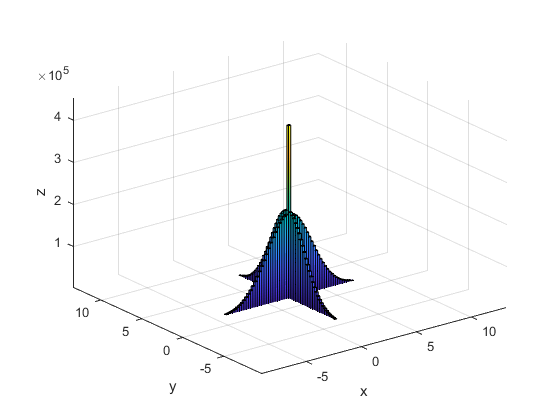}\label{fig:axesion_hist}}\\
	\caption{Visualization of 2D plots and 3D histograms of state transformations. (a) The 2D plot of rotation. (b) The 3D histogram of rotation.
            (c) The 2D plot of translation. (d) The 3D histogram of translation. (e) The 2D plot of expansion. (f) The 3D histogram of expansion. (g) The 2D plot of axesion. (h) The 3D histogram of axesion.}
	\label{fig:state_transformations}
\end{figure*}

By leveraging state transformation operators, a sampling technique, and an update strategy, the core procedure of the basic STA can be described in the pseudocode below \cite{zhou2019statistical}.
\begin{algorithmic}[1]
\Repeat
    \If{$\alpha < \alpha_{\min}$}
    \State {$\alpha \gets \alpha_{\max}$}
    \EndIf
    \State {Best $\gets$ expansion(*,Best,SE,$\beta$,$\gamma$)}
    \State {Best $\gets$ rotation(*,Best,SE,$\alpha$,$\beta$)}
    \State {Best $\gets$ axesion(*,Best,SE,$\beta$,$\delta$)}
    \State {$\alpha \gets \frac{\alpha}{\textit{fc}}$}
\Until{the termination criterion is satisfied}
\end{algorithmic}
where, \emph{SE} is the sample size, and the termination criterion is usually the maximum number of function evaluations or iterations.
It should be emphasized the TT operator is  only executed  when a new best solution is found in other state transformation, and the update strategy $f(\bm {Best}_{k+1}) \leq f(\bm {Best}_{k})$ is used to update incumbent best solution. For more details, please refer to \cite{zhou2019statistical}.

\subsection{The principle behind the STA}
The primary goal of STA is to find a global or near-global optimal solution to the optimization problem as quickly as possible.
As a constructivist learning-based intelligent optimization method, STA possesses the following five core structural components:

\begin{itemize}
    \item Globality: the capability to explore the entire search space;
    \item Optimality: the guarantee of achieving an optimal solution;
    \item Rapidity: reducing computational complexity as much as possible;
    \item Convergence: guaranteeing the convergence of the generated sequence;
    \item Controllability: flexibly managing the search space.
\end{itemize}

The globality mainly lies in the expansion transformation operator. Given that $R_{e}$ is a random diagonal
matrix with entries following a Gaussian distribution, by using the expansion transformation with a nonzero vector $\bm s_k$ and sufficiently large expansion factor $\gamma$,
the entries in $\bm s_{k+1}$ can be expanded to the range of $(-\infty, +\infty)$, from the perspective of probability theory.

The optimality mainly lies in the rotation transformation operator. It can be proved that $\|\bm s_{k+1} - \bm s_{k}\| \leq \alpha$ by using the rotation transformation \cite{zhou2012state, zhou2018dynamic}. That is to say, with a sufficiently large sample size \emph{SE} and sufficiently small rotation factor $\alpha_{\min}$, if no better candidate $\bm s_{k+1}$ is found, the incumbent $\bm{s}_{k}$ can be regarded as an optimal solution in the mathematical sense, achieved with a prescribed level of precision $\alpha_{\min}$.

The rapidity lies in several aspects: \emph{i}) the sampling technique; based on the incumbent solution, the candidates generated by each state transformation operator form a regular neighborhood, from which representative candidates can be selected by sampling, avoiding exhaustive enumeration. \emph{ii}) the alternate strategy; the alternative use of different transformation operators facilitates escape from regions around local optima.  \emph{iii}) efficient implementation; for example,  the time complexity of the rotation transformation implementation has been reduced from $O(n^2)$ to $O(n)$ \cite{zhou2018dynamic}.

The convergence mainly lies in the update rule: $f(\bm {Best}_{k+1}) \leq f(\bm {Best}_{k})$. Since the sequence $\{f({\bm{Best}}_{k})\}$ is monotonically decreasing and $f(\bm {Best})$ is bounded below, the sequence $\{f({\bm{Best}}_{k})\}$ converges.

The controllability mainly lies the transformation factors $\alpha, \beta, \gamma$ and $\delta$. These transformation factors can regulate the search space to a desired region with a specific size. For example, the rotation transformation enables the search within a hypersphere of maximal radius $\alpha$.

\begin{theorem}
If the sample size SE is sufficiently large and the rotation factor $\alpha$ is sufficiently small at the later stages, the sequence $\{f({\bm{Best}}_{k})\}$ generated by the STA can
converge to a minimum, \textit{i.e.},
\begin{eqnarray}
\lim_{k \rightarrow \infty} f(\bm {Best}_k) = f(\bm x^{*}),
\end{eqnarray}
where $\bm x^{*}$ is a minimum.
\end{theorem}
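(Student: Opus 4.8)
The plan is to reduce the statement to three ingredients: the monotone convergence of the objective values (essentially already argued in the ``Convergence'' discussion above), the compactness of $\Omega$, and the sampling property of the rotation transformation. First I would record that by the update rule $f(\bm{Best}_{k+1}) \le f(\bm{Best}_k)$ the sequence $\{f(\bm{Best}_k)\}$ is non-increasing, and since $f$ is bounded below on $\Omega$ it converges to some value $f^{*}$, so that $f(\bm{Best}_k) \ge f^{*}$ for every $k$. I would also note that the objective values of the intermediate incumbents generated within an iteration (in particular the incumbent that enters each rotation step) are squeezed between consecutive end-of-iteration values and hence also tend to $f^{*}$.

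Next I would invoke compactness: the incumbents entering the rotation step form a sequence in the compact set $\Omega$, so a subsequence converges, say $\bm{Best}_{k_j} \to \bm x^{*} \in \Omega$, and continuity of $f$ together with $f(\bm{Best}_{k_j}) \to f^{*}$ gives $f(\bm x^{*}) = f^{*}$. It then remains to show $\bm x^{*}$ is a local minimum, which I would do by contradiction. If $\bm x^{*}$ were not a local minimum, then every neighborhood of $\bm x^{*}$ contains a point with strictly smaller objective value; pick such a $\bm y$ with $\|\bm y - \bm x^{*}\|_2 < \alpha/2$ and $f(\bm y) < f^{*}$, and by continuity a ball $\mathcal{B}(\bm y,\rho)$ on which $f < f^{*} - \eta$ for some $\eta > 0$. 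Since $\alpha$ never collapses to zero along its cyclic schedule (it is reset to $\alpha_{\max}$ whenever it falls below $\alpha_{\min}$), for all large $j$ we have $\|\bm{Best}_{k_j} - \bm x^{*}\|_2 < \alpha/2$, hence $\mathcal{B}(\bm y,\rho) \subset \mathcal{B}(\bm{Best}_{k_j},\alpha)$, the hypersphere explored by the rotation transformation from $\bm{Best}_{k_j}$. Because the operator $\bm s_{k+1} = \bm s_k + \alpha R_r \bm u_k/\|\bm u_k\|_2$ has full support on that ball, a single draw lands in $\mathcal{B}(\bm y,\rho)$ with some probability $p>0$, so with $SE$ samples at least one does so with probability $1-(1-p)^{SE}$, which tends to $1$ as $SE \to \infty$; such a candidate has objective value below $f^{*}-\eta$, so the update rule forces $f(\bm{Best}_{k_j+1}) < f^{*}-\eta$, contradicting $f(\bm{Best}_k)\ge f^{*}$. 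Therefore $\bm x^{*}$ is a local minimum and $\lim_{k\to\infty} f(\bm{Best}_k) = f^{*} = f(\bm x^{*})$.

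The hard part is this last step. One has to (i) argue that the rotation sampling assigns a uniformly positive probability to any small ball sitting in the interior of the reachable hypersphere, so that ``sufficiently large $SE$'' really does force a hit, at least in the almost-sure / in-probability sense consistent with the theorem's phrasing; and (ii) control the interplay between the cyclically shrinking $\alpha$ and the rate at which $\bm{Best}_{k_j} \to \bm x^{*}$, so that the improving ball $\mathcal{B}(\bm y,\rho)$ genuinely lies inside $\mathcal{B}(\bm{Best}_{k_j},\alpha)$ for infinitely many $j$. Everything else is routine bookkeeping with the monotone-convergence and compactness facts already in hand.
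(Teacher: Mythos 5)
Your proposal is correct and rests on the same two-step skeleton as the paper's proof: first, monotone convergence of $\{f(\bm {Best}_k)\}$ from the update rule $f(\bm {Best}_{k+1}) \leq f(\bm {Best}_k)$ plus boundedness below; second, local optimality from the rotation transformation's hypersphere sampling with large \emph{SE} and small $\alpha$. The difference is in how the second step is discharged. The paper asserts it in one sentence: with \emph{SE} sufficiently large and $\alpha$ sufficiently small, $f(\bm {Best}_k) \leq f(\bm s)$ for all $\bm s$ with $\|\bm s - \bm {Best}_k\| \leq \alpha_{\min}$, so the incumbent is ``a local minimum with solution accuracy $\alpha_{\min}$'' --- exactly the step it labels ``not difficult to derive.'' You instead extract a convergent subsequence of incumbents via compactness of $\Omega$, identify its limit $\bm x^{*}$ with $f(\bm x^{*})=f^{*}$ by continuity, and rule out non-optimality by a contradiction argument exploiting the full support of the rotation sampling (direction with positive density on the sphere, radius uniform on $[0,\alpha]$). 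Your route buys a genuine limit point that is an exact local minimum, in a with-high-probability or almost-sure sense, whereas the paper buys only approximate local optimality of the incumbent with tolerance $\alpha_{\min}$ while glossing over the probabilistic nature of the guarantee. Two small repairs: the improving point $\bm y$ should be taken within distance $\alpha_{\min}/2$ of $\bm x^{*}$ (not $\alpha/2$), since $\alpha_{\min}$ is the guaranteed lower bound on the rotation radius at every call under the cyclic reset schedule; and the hit probability $p$ must be bounded below uniformly along the subsequence, which indeed follows because the centers converge to $\bm x^{*}$ while the target ball is fixed. With these, a Borel--Cantelli-type argument even gives the contradiction almost surely for any fixed $SE \geq 1$, so your argument is, if anything, stronger than what the theorem's informal hypotheses demand.
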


\begin{proof}
First, under the update rule $f(\bm{Best}_{k+1}) \leq f(\bm{Best}_k)$ in STA, the sequence ${f(\bm{Best}_k)}$ is monotonically nonincreasing and bounded below. Therefore, by the monotone convergence theorem, the sequence ${f(\bm{Best}_k)}$ converges.

Secondly, since the sample size \emph{SE} is sufficiently large and the rotation factor $\alpha$ is sufficiently small, it is not difficult to derive that
$f(\bm {Best}_{k}) \leq f(\bm s_{k+1}), \forall \bm s_{k+1} \in \mathcal{S}$, where $\mathcal{S} =  \{ \bm s| \|\bm s - \bm {Best}_{k}\| \leq \alpha_{\min} \}$ is the neighborhood of $\bm {Best}_{k}$, that is to say, $f(\bm x^{*}) = \lim_{k \rightarrow \infty} f(\bm {Best}_k)$ can be considered as  a minimum with solution accuracy $\alpha_{\min}$.

To sum up, the sequence $\{f({\bm{Best}}_{k})\}$ generated by the STA can converge to a minimum with certain solution accuracy.
\end{proof}

\begin{remark}
The ``guaranteed optimality" in this study means the proposed algorithms converge to a minimum, either a local minimum or a global minimum for the unconstrained optimization problem. A key drawback of many intelligent optimization algorithms is their tendency to converge prematurely to a stable point, which may not represent the true optimum \cite{zhou2025stagnation}.
\end{remark}

\begin{remark}
The sample size depends on the dimension of the optimization problem.
Based on our empirical experience, the recommended sample size SE is set to $2n$ for low dimensions, $5n$ for medium dimensions, and $10n$ for high dimensions.
\end{remark}

\subsection{Analysis of the termination criteria in intelligent optimization}
In intelligent optimization, most  studies focus on generating candidate solutions and updating incumbents to balance global exploration and local exploitation. Termination criteria are often overlooked, with a predefined number of function evaluations or iterations typically used as the stopping condition. However, termination criteria play a crucial role in solving real-world optimization problems.
If they are set too small, the algorithm may fail to find the optimal solution; on the contrary, if set too large, the algorithm may result in a waste of computing resources. As a result, extensive trial and error is required, which is not feasible in a time-constrained environment. In the academic community, the indicators to termination criteria can be summarized into the following
three categories \cite{liu2018termination, xu2025handling, hu2026auxiliary, yu2026dual}:

\begin{enumerate}
  \item Resource indicators
    \begin{itemize}
    \item Max iterations;
    \item Max function evaluations;
    \item Max CPU time.
    \end{itemize}
  \item Progress indicators
    \begin{itemize}
        \item accuracy, \emph{i.e.}, the distance to the optimum is less than a given tolerance;
        \item hitting a bound, \emph{i.e.}, the best found objective function value hits a given bound;
        \item $K$-iterations, \emph{i.e.}, there is no obvious improvement after $K$ consecutive generations or iterations.
    \end{itemize}
  \item Characteristic indicators
      \begin{itemize}
        \item convergence;
        \item diversity;
        \item clustering;
        \item statistic.
    \end{itemize}

\end{enumerate}

Although various termination criteria have been proposed in intelligent optimization, determining an appropriate termination criterion for most algorithms remains an open problem. This is because the best solution found may not be strictly optimal in a mathematical sense when computational resources are exhausted, progress has stagnated, or certain stopping conditions are met. Fortunately, as discussed earlier, STA possesses intrinsic properties that enable the design of a termination criterion with theoretical guarantees of optimality.

\section{The Proposed efficient STA}
\label{sec:ESTA}
Considering that the standard STA exhibits slow convergence in later stages, particularly on functions with flat landscapes, novel state transformations and
parameter adaptation strategy are proposed to accelerate the convergence in this section.
\subsection{Novel translation transformation based on predictive modeling}
In STA, the translation transformation is employed as a heuristic search mechanism to generate more potential solutions, and it can search along a line from previous best solution to current best solution. This is intuitive, as the current best solution improves upon the previous one.
From the perspective of regression analysis, the translation transformation in standard STA is a first-order predictive model with normalized scale, but only the last best solution is utilized. To make sufficient utilization of historical best solutions, in this part, novel translation transformations based on first and second order predictive models are proposed.

Firstly, let us recall the widely used autoregressive integrated moving average  ARIMA($p,d,q$) model \cite{li2023self}:
\begin{equation}
\bm x_t = \bm c +   \sum_{i=1}^p \phi_i \bm x_{t-i} + \bm \epsilon_t + \sum_{i=1}^q \theta_i \bm \epsilon_{t-i},
\end{equation}
where $\bm x_t$ is the actual value, $\bm \epsilon_t$ is the Gaussian white noise, $p$ and $q$ are referred to as the autoregressive and moving average orders, respectively. $d$ is the degree of differencing.

To make sure that the series can converge, new translation transformations based on the first and second order predictive models are proposed as follows
\begin{subequations}
\label{eq:predictive_model}
\begin{align}
\bm s_{k+1} &= \bm s_{k} +  \beta  \hat{R}_{t}  (\bm s_{k}- \bm s_{k-1}) ,\\
\bm s_{k+1} &= \bm s_{k} +  \beta  \hat{R}_{t}  (\bm s_{k-1}- \bm s_{k-2}) ,
\end{align}
\end{subequations}
where $\bm s_k$ represents the current state, $\bm s_{k-1}$ and $\bm s_{k-2}$ are historical states.
$\hat{R}_{t}$ $\in \mathbb{R}$ is a uniformly distributed random variable over the interval [-1,1]. Furthermore, to reflect the influence of the degree of differencing $d$, $\bm s_{k-1}$ and $\bm s_{k-2}$ are chosen randomly from the historical best archive.

The differences between the first and the second order predictive models are illustrated in Fig. \ref{fig:predictive_model}.
With the same previous best solutions $A, B, C$, the current best solution $D$, the next candidate solutions $A',B',C'$ are different using different predictive models. In the first order predictive model, $A$, $D$, and $A'$ are in the same line, the same to $B$, $D$, and $B'$, $C$, $D$, and $C'$.
In the second order predictive model, the line $AB$ is parallel to line $DA'$, the same to line $BC$ and line $DB'$, line $AC$ and line $DC'$.
Although the newly generated solutions differ, the convergence rate is governed by the same term $\beta  \hat{R}_{t}$, as shown in Eq. (\ref{eq:predictive_model}).

\begin{figure*}[!t]
	\centering
	
	\subfloat[(a)]{\includegraphics[width=0.45\textwidth]{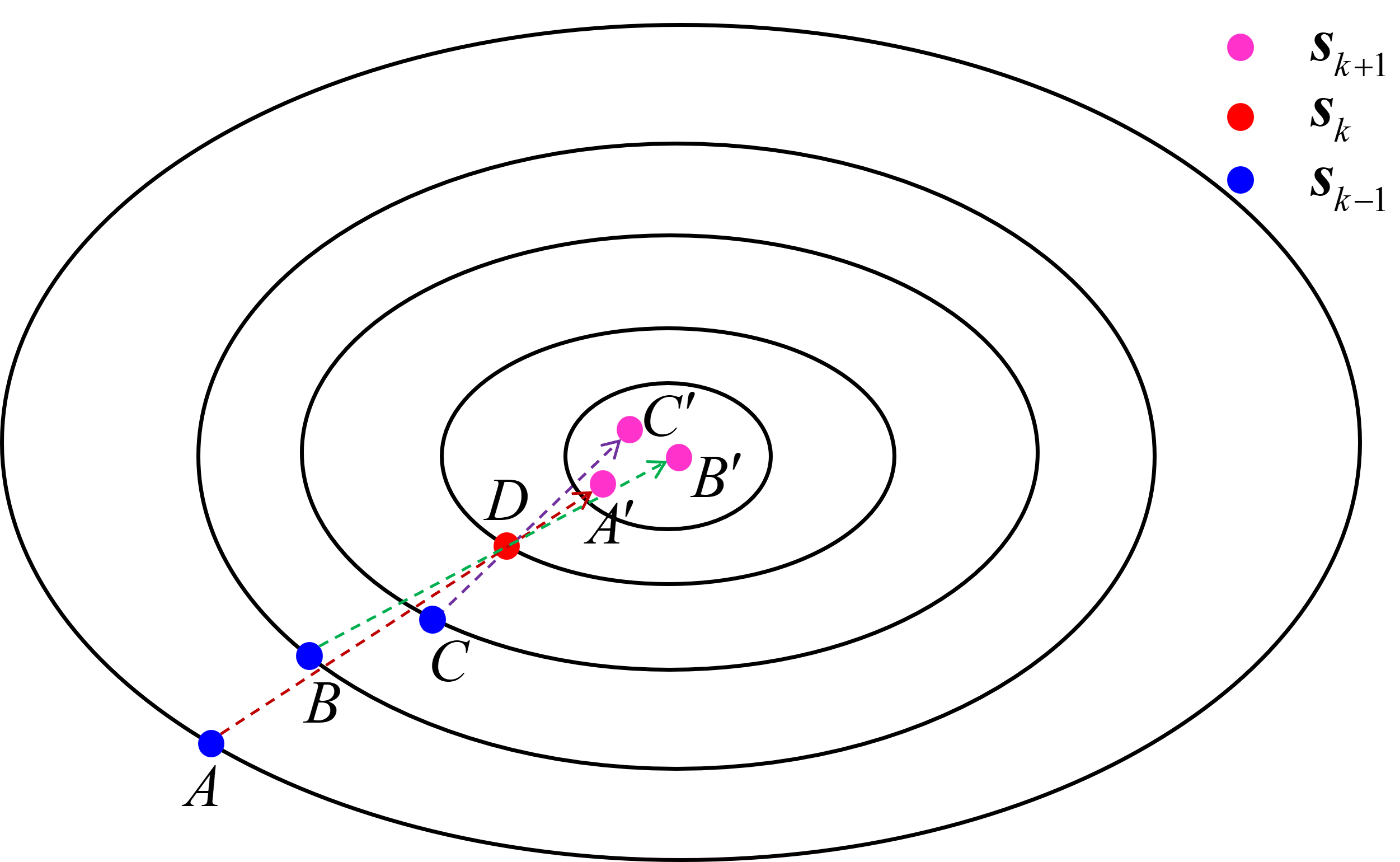}}
    \hspace{0.05\textwidth}
	\subfloat[(b)]{\includegraphics[width=0.45\textwidth]{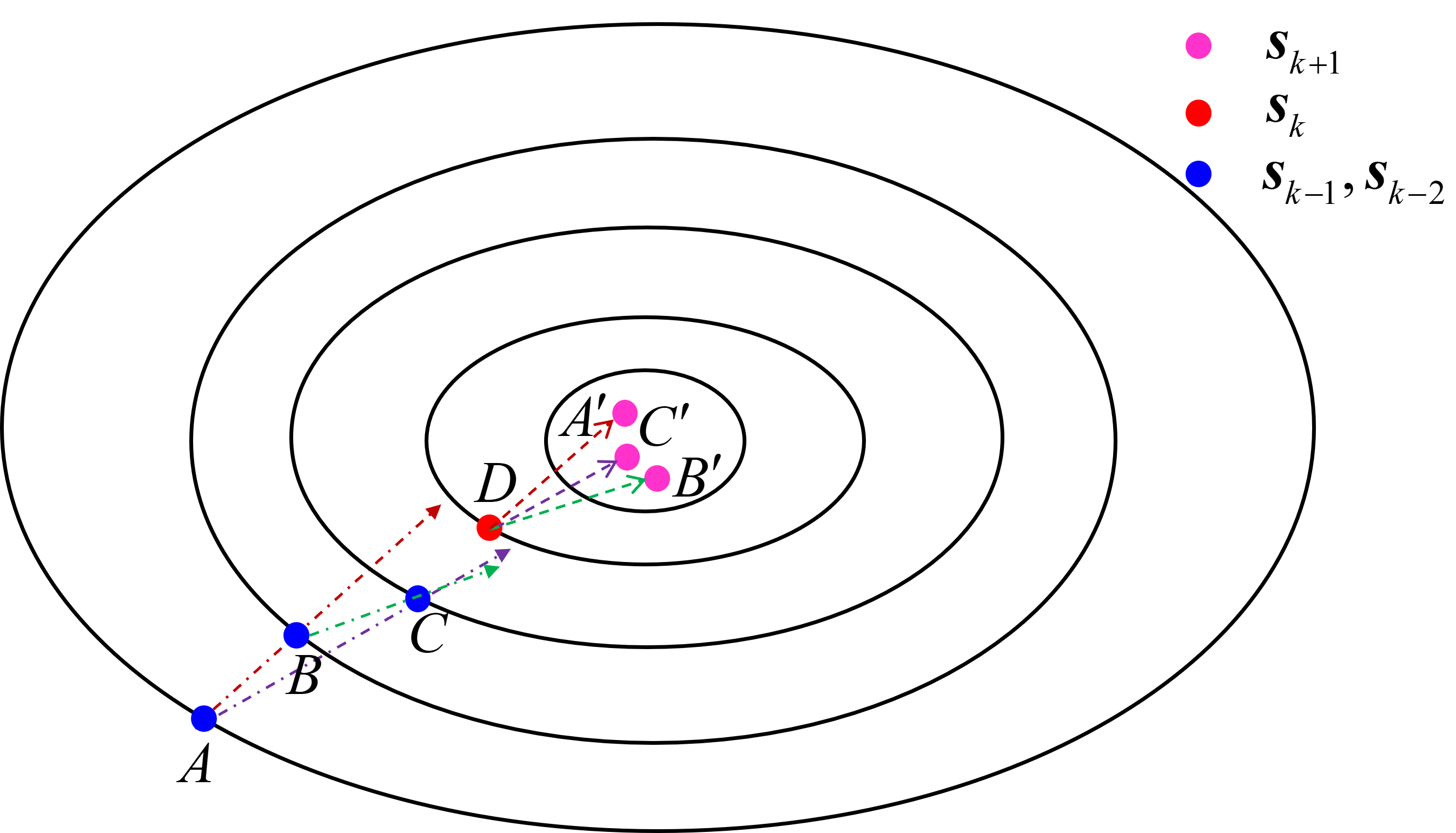}}

	\caption{Visualization and comparison of two novel translation transformations. (a) Novel translation transformation based on the first-order predictive model.
      (b) Novel translation transformation based on the second-order predictive model.}
	\label{fig:predictive_model}
\end{figure*}

\subsection{New expansion and axesion transformation for supplementation}
In the standard STA, the ET operator performs global search, whereas the AT operator conducts heuristic search. Although these two transformation operators perform well in most scenarios, their effectiveness diminishes in an extreme case. As shown in Fig. \ref{fig:extreme_case},
the global point $B$ (where $x = 2$) is far away from the local point point $A$ (where $x = 0.2$).
If the incumbent best solution $s_k$ is around the local point $A$ and the state transformation factor is set at 1, it is extremely difficult to jump out of the neighborhood since
the probability $P(s_{k+1} \geq 0.8) \approx 0.13\% $ .

\begin{figure}[!htbp]
\centering
\includegraphics[width=0.45\textwidth]{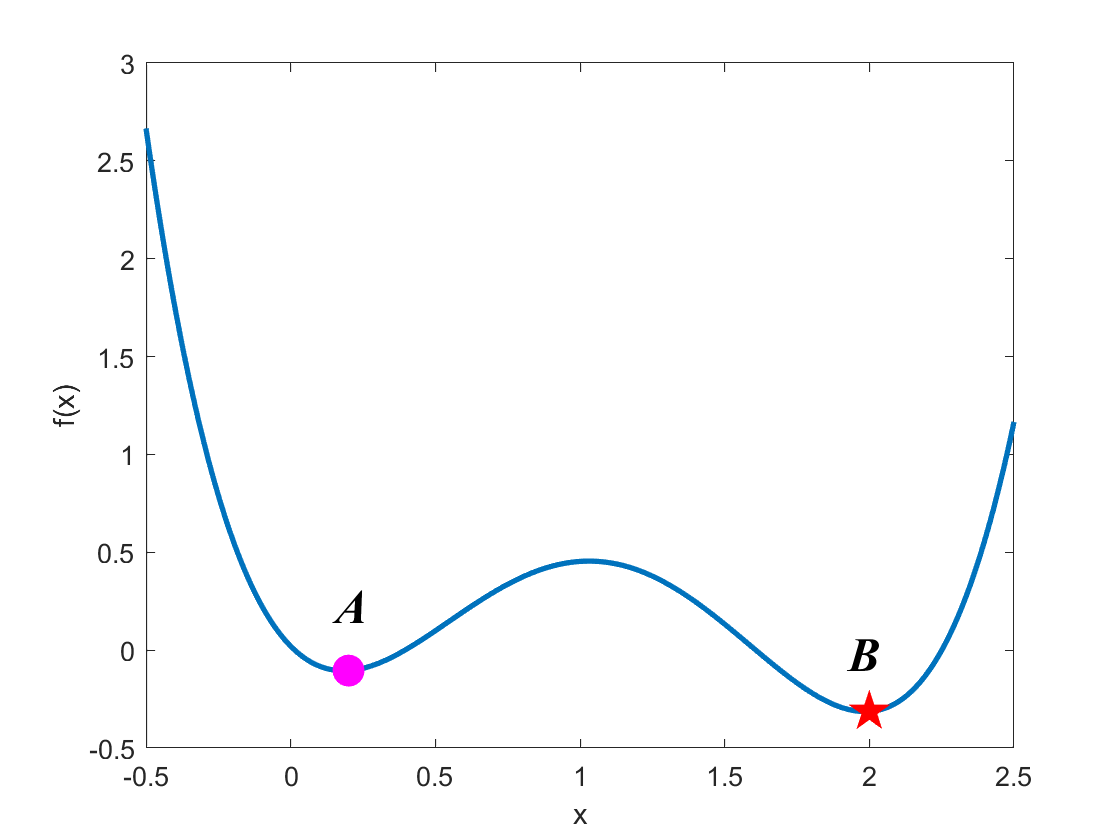}
\caption{Illustration of an extreme case.}
\label{fig:extreme_case}
\end{figure}

To resolve the above-mentioned issue, new expansion and axesion transformation are designed as follows.
\begin{equation}
\begin{aligned}
\mathrm{expansion}: \bm s_{k+1} &= \bm s_{k} + \gamma R_{e} \bm u_{k}, \\
\mathrm{axesion}: \bm s_{k+1} &= \bm s_{k} + \delta R_{a} \bm u_{k}.
\end{aligned}
\end{equation}
Here, $\bm u_{k} = [1, \cdots, 1]^T$ is the all-ones vector.

\begin{remark}
For the extreme case, by using the new axesion transformation, the probability to jump out of the local point $A$ is $P(s_{k+1} \geq 1) \approx 21\% $. Given that the original expansion and axesion transformation perform well in most cases, these new transformations are therefore combined with the original ones.
\end{remark}

\subsection{Proposed parameter control strategy}
Parameter setting is a key issue in intelligent optimization algorithms, as their performance strongly depends on parameter values. Parameter setting can be broadly categorized into parameter tuning and parameter control. In parameter tuning, suitable parameter values are selected in advance and kept fixed throughout the optimization process. In contrast, parameter control dynamically adjusts parameter values during the search to improve performance and adaptability. Since identifying optimal parameter values is often challenging and time-consuming, and no single parameter configuration is universally effective across different problems, parameter control becomes essential, particularly for complex optimization tasks.

Parameter setting is also important in STA, as the state transformation factors have been shown to significantly influence its performance. In the standard STA \cite{zhou2012state}, the translation factor $\beta$, the expansion factor $\gamma$, and the axesion factor $\delta$ are kept constant, while the rotation factor $\alpha$ decreases exponentially from 1 to $10^{-4}$ with base 2 in a periodic manner. In dynamic STA (DaSTA) \cite{zhou2018dynamic}, all state transformation factors decrease exponentially from 1 to $10^{-4}$ with base 2 in a periodic manner to enhance local exploitation in the later stages. In parameter-optimal STA (POSTA) \cite{zhou2019statistical}, a parameter selection strategy is proposed to accelerate convergence. Specifically, parameter values are selected from the set ${1, 10^{-1}, 10^{-2}, \dots, 10^{-8}}$ and kept constant for a predefined period to ensure full utilization.

\subsubsection{An intuitive parameter adaptation strategy}

First, we focus on two key state transformation factors: the rotation factor $\alpha$ and the expansion factor $\gamma$, which correspond to local search and global search, respectively. The central idea is to adapt these parameters according to changes in the incumbent solution. As is well known, larger parameter values promote global exploration, whereas smaller values favor local exploitation. Moreover, global exploration is typically emphasized in the early stages to explore unknown regions, while local exploitation is strengthened in the later stages to refine the solution. Based on our previous studies, setting the state transformation factors to 1 provides a balanced and moderate choice. If the values are too large, redundant and ineffective candidate solutions are generated, leading to inefficiency. Conversely, if the values are too small in the early stages, the improvement of the solution becomes slow.

Based on the above analysis, an intuitive parameter adaptation strategy is proposed, as illustrated in Fig. \ref{fig:param_adapt}, where the change between two consecutive incumbent solutions is defined as follows:
\begin{equation}
\triangle x = \max(\left| \bm {Best}_{k} - \bm {Best}_{k-1} \right|),
\end{equation}
here, $\bm {Best}_{k}$ and $\bm {Best}_{k-1}$ are current best solution and previous best solution, respectively. It utilizes the maximum variation in two incumbent solutions as the indicator to adjust the state transformation factors $\alpha$ and $\gamma$. The $\varepsilon$ in Fig. \ref{fig:param_adapt} represents the precision level of the solution, which is specified by the user.

\begin{figure}[!htbp]
\centering
\includegraphics[width=0.45\textwidth]{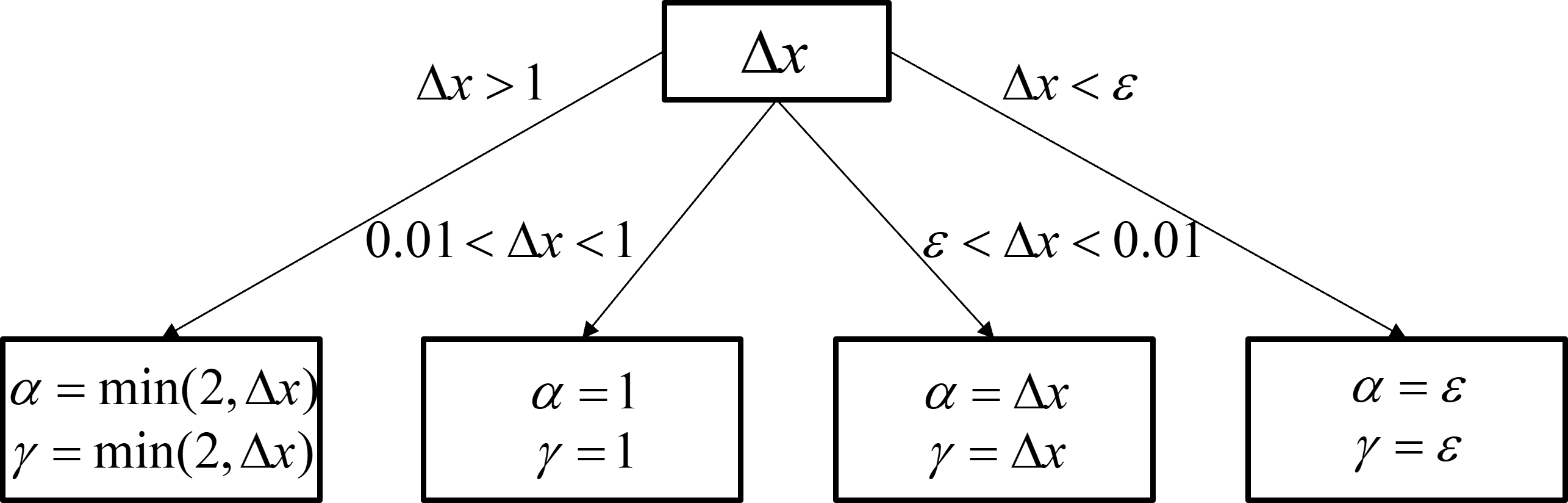}
\caption{Intuitive parameter adaptation strategy.}
\label{fig:param_adapt}
\end{figure}

\subsubsection{Parameter optimal selection strategy}
On the other hand, as indicated in POSTA \cite{zhou2019statistical}, the impact of parameter on STA is predictable, as its parameter is analogous to the learning rate, also known as the step size in gradient-based optimization algorithms, as shown below
\begin{displaymath}
\left. \begin{array}{l}
R_r \frac{\bm u_k}{\|\bm u_k\|_2} \\
\hat{R}_{t}  (\bm s_{k}- \bm s_{k-1})\\
\cdots
\end{array} \right \} \Rightarrow \bm d_k,
\left. \begin{array}{l}
\alpha \\
\beta \\
\cdots
\end{array} \right \} \Rightarrow a_k,
\bm s_{k+1} = \bm s_{k} + a_k \bm d_k,
\end{displaymath}
where $\bm d_k$ is analogous to the search direction in trust region methods.

Assuming that the incumbent solution is not optimal, sufficiently small parameter values allow the corresponding state transformation operator to generate improved candidate solutions. In contrast, overly large or inappropriate parameter values may lead to inefficient search.

With this in mind, the parameter selection problem can be viewed as analogous to line search in classical mathematical programming methods. For simplicity, an inexact line search strategy is adopted, as described below.

\begin{eqnarray}
a^{*} = \argmin_{a_k \in \Omega} f(\bm s_k + a_k\bm d_k),
\end{eqnarray}
where $\Omega = \{2, 1, 10^{-1}, 10^{-2}, \dots, 10^{-8}\}$.

\subsection{Proposed efficient STA with guaranteed optimality}
With the aforementioned novel state transformation operators and parameter control strategies,
the procedure of the proposed efficient STA with guaranteed optimality is given as follows.
\begin{algorithmic}[1]
\Repeat
    \State  prevfBest $\gets$  fBest
    \State {[Best,fBest,Archive] $\gets$ expansion(*,Best,Archive,$\gamma$)}
    \State {[Best,fBest,Archive] $\gets$ rotation(*,Best,Archive,$\alpha$)}
    \State {[Best,fBest,Archive] $\gets$ axesion(*,Best,Archive,$\delta$)}
    \State {[Best,fBest,Archive] $\gets$ translation(*,Best,Archive,$\beta$)}
    \State {[$\alpha,\beta,\gamma,\delta$] $\gets$ UpdateParameter()}
\Until{$ \mathrm{fBest} == \mathrm{prevfBest} \;\&\; \alpha \leq \varepsilon$  }
\end{algorithmic}
where, the Archive is to store historical best solutions and $\varepsilon$ is the solution accuracy.

\begin{remark}
The designed termination condition indicates that if the rotation factor $\alpha$ decreases below a predefined threshold $\varepsilon$ without any further progress ($\mathrm{fBest} == \mathrm{prevfBest}$), the STA will terminate automatically. By the way, to distinguish between different parameter control strategies, the proposed STA, which incorporates an intuitive parameter adaptation strategy that is simple yet efficient, is abbreviated as ESTA. In the meanwhile, the extended version that employs the optimal parameter selection strategy is called EXSTA.
\end{remark}

\section{Experimental results and analysis}
\subsection{Experimental setup}
In this part, extensive experiments are conducted to evaluate the performance of the proposed effective STA.
To avoid redundant comparisons among functions with similar characteristics and centre-bias issue in evolutionary computation \cite{kudela2022critical}, a set of representative benchmark functions was selected, most of which are shifted, as presented in Appendix A, Table \ref{tab:ESTA_benchmarks}.  They cover a diverse range of types, including
Type I: unimodal $(F_1, F_9, F_{11}, F_{12})$ and multimodal $(F_2 - F_8, F_{10}, F_{13}-F_{15})$; Type II: separable $(F_1, F_3, F_7, F_{13})$, and nonseparable $(F_2, F_4-F_6, F_8- F_{12}, F_{14}, F_{15})$; Type III: smooth $(F_1-F_{10})$ and unsmooth $(F_{11} - F_{15})$; Type IV: flatten $(F_2, F_7, F_8)$, and unflatten $(F_1, F_3- F_6, F_9- F_{15})$.

For a comprehensive evaluation, the proposed efficient STA is first compared with several STA variants, including the standard STA \cite{zhou2012state}, DaSTA \cite{zhou2018dynamic}, and POSTA \cite{zhou2019statistical}. It is then compared with other representative intelligent optimization algorithms, including well-established methods such as GL25 \cite{garcia2008global}, CLPSO \cite{liang2006comprehensive}, CMA-ES \cite{hansen2003reducing}, and LSHADE \cite{tanabe2013success}, as well as popular methods such as GWO \cite{mirjalili2014grey} and WOA \cite{mirjalili2016whale}, and recently proposed methods such as PGA \cite{bohat2025phototropic} and KLA \cite{ghasemi2025kirchhoff}.

For comparisons with other optimization algorithms, the experiments are divided into two groups based on termination criteria: maximum stall iterations (maxStalls) for benchmark functions and maximum function evaluations (maxFEs) for a real-world problem. Each benchmark function is tested in 20, 30, 50, and 100 dimensions, and all algorithms are independently executed 30 times in MATLAB (R2022b) on a PC equipped with an Intel(R) Core(TM) i7-1260P CPU @ 2.10 GHz.

For parameter settings, all competing optimization algorithms use their default parameter values. In STA and its variants, the sample size \emph{SE} is set to 30. For the proposed termination criterion, convergence is assumed when $\mathrm{prevBest} - \mathrm{fBest} \leq \mathrm{eps}$ (a constant in MATLAB), indicating that $\mathrm{fBest}$ remains unchanged. The solution accuracy threshold $\varepsilon$ is set to $10^{-8}$. The archive size is set to 100 in ESTA and EXSTA. When applying the optimal parameter selection strategy, the hyperparameters are set to be the same as those in POSTA.

\subsection{Comparison among different predictive models}

As described in Section \ref{sec:ESTA}, three types of predictive models are considered: the first-order model, the second-order model, and their hybrid. In this section, a comparative study is conducted to examine the differences among these models. Different ESTA variants based on these predictive models, all equipped with the proposed termination criterion, are compared, and the statistical results are reported in Table \ref{tab:ESTA_predictive_model}. The mean (denoted by \textbf{m}) and standard deviation (denoted by \textbf{sd}) of the objective function values (abbreviated as \textbf{ObjVal}) are used to evaluate the search performance and stability of the algorithms. Meanwhile, the mean gradient norm (abbreviated as \textbf{GradNorm}) is used to assess optimality.

From the statistical results in Table \ref{tab:ESTA_predictive_model}, it can be observed that all ESTA variants based on predictive models, equipped with the proposed termination criterion, achieve optimality, as the gradient norm is on the order of $10^{-4}$ or smaller (a commonly adopted tolerance in engineering applications). Most of these variants are able to locate the global optimum with satisfactory accuracy, except for the Griewank function in 20 dimensions and the Michalewicz function, for which the global optimum is unknown. Furthermore, based on the Wilcoxon rank-sum test results in Table \ref{tab:ranksum_predictive_model}, the first-order model performs better in low-dimensional problems, whereas the second-order model is more suitable for high-dimensional ones. The hybrid model, however, performs well across different dimensions and is therefore adopted in the remainder of this study.

\begin{table*}[!htbp]
	\caption{Comparison of ESTA Variants Based on Different Predictive Models with the Proposed Termination Criterion}
	\label{tab:ESTA_predictive_model}
	\tiny
	\begin{tabular}{l|c|c|c|c|c|c|c}
		\hline
		\toprule[1pt]
		\multirow{2}{*}{Fcn} & \multirow{2}{*}{Dim} & \multicolumn{2}{c|}{First-order model} & \multicolumn{2}{c|}{Second-order model} & \multicolumn{2}{c}{Hybrid model} \\
		\cline{3-8}
		& & ObjVal (m $\pm$ sd) & GradNorm (m) & ObjVal (m $\pm$ sd) & GradNorm (m) & ObjVal (m $\pm$ sd) & GradNorm (m)\\
		\hline
		& 20 & $1.74 \times 10^{-15} \pm 3.42 \times 10^{-16}$ & $8.30 \times 10^{-8}$ & $\mathbf{1.67 \times 10^{-15} \pm 2.76 \times 10^{-16}}$ & $8.14 \times 10^{-8}$ & $1.73 \times 10^{-15} \pm 4.59 \times 10^{-16}$ & $8.73 \times 10^{-8}$  \\
		$F_{1}$& 30 &$\mathbf{3.05 \times 10^{-15} \pm 4.65 \times 10^{-16}}$ & $1.12 \times 10^{-7}$ & $4.27 \times 10^{-15} \pm 1.53 \times 10^{-15}$ & $1.29 \times 10^{-7}$ & $3.29 \times 10^{-15} \pm 6.23 \times 10^{-16}$ & $1.14 \times 10^{-7}$  \\
		& 50 & $7.40 \times 10^{-15} \pm 1.59 \times 10^{-15}$ & $1.71 \times 10^{-7}$ & $1.05 \times 10^{-14} \pm 1.20 \times 10^{-15}$ & $2.04 \times 10^{-7}$ & $\mathbf{7.08 \times 10^{-15} \pm 1.06 \times 10^{-15}}$ & $1.68 \times 10^{-7}$  \\
		& 100 & $\mathbf{1.73 \times 10^{-14} \pm 1.74 \times 10^{-15}}$ & $2.63 \times 10^{-7}$ & $1.79 \times 10^{-14} \pm 3.04 \times 10^{-15}$ & $2.72 \times 10^{-7}$ & $1.97 \times 10^{-14} \pm 4.76 \times 10^{-15}$ & $2.83 \times 10^{-7}$  \\
		\hline
		& 20 & $1.63 \times 10^{-14} \pm 7.22 \times 10^{-15}$ & $4.50 \times 10^{-6}$ & $3.65 \times 10^{-14} \pm 1.96 \times 10^{-14}$ & $3.50 \times 10^{-6}$ & $\mathbf{1.33 \times 10^{-14} \pm 2.20 \times 10^{-15}}$ & $4.10 \times 10^{-6}$  \\
		$F_{2}$& 30 & $\mathbf{2.78 \times 10^{-14} \pm 1.71 \times 10^{-14}}$ & $5.26 \times 10^{-6}$ & $4.82 \times 10^{-14} \pm 2.91 \times 10^{-14}$ & $6.84 \times 10^{-6}$ & $2.21 \times 10^{-13} \pm 1.90 \times 10^{-13}$ & $4.56 \times 10^{-6}$  \\
		& 50 & $1.90 \times 10^{-13} \pm 7.75 \times 10^{-14}$ & $7.87 \times 10^{-6}$ & $2.85 \times 10^{-13} \pm 2.29 \times 10^{-13}$ & $8.83 \times 10^{-6}$ & $\mathbf{7.96 \times 10^{-14} \pm 3.84 \times 10^{-14}}$ & $7.87 \times 10^{-6}$  \\
		& 100 & $4.78 \times 10^{-11} \pm 1.64 \times 10^{-11}$ & $2.37 \times 10^{-5}$ & $\mathbf{8.97 \times 10^{-13} \pm 3.91 \times 10^{-13}}$ & $1.53 \times 10^{-5}$ & $1.79 \times 10^{-12} \pm 3.14 \times 10^{-13}$ & $1.70 \times 10^{-5}$  \\
		\hline
		& 20 & $2.84 \times 10^{-13} \pm 0 $ & $1.64 \times 10^{-5}$ & $\mathbf{1.98 \times 10^{-13} \pm 6.84 \times 10^{-14}}$ & $1.39 \times 10^{-5}$ & $2.80 \times 10^{-13} \pm 8.57 \times 10^{-14}$ & $1.64 \times 10^{-5}$  \\
		$F_{3}$& 30 & $\mathbf{1.10 \times 10^{-12} \pm 1.51 \times 10^{-13}}$ & $3.13 \times 10^{-5}$ & $2.11 \times 10^{-12} \pm 6.99 \times 10^{-13}$ & $4.12 \times 10^{-5}$ & $1.12 \times 10^{-12} \pm 1.15 \times 10^{-13}$ & $3.18 \times 10^{-5}$  \\
		& 50 & $4.59 \times 10^{-12} \pm 8.95 \times 10^{-13}$ & $6.18 \times 10^{-5}$ & $3.89 \times 10^{-12} \pm 1.51 \times 10^{-12}$ & $5.61 \times 10^{-5}$ & $\mathbf{2.87 \times 10^{-12} \pm 1.41 \times 10^{-12}}$ & $5.00 \times 10^{-5}$  \\
		& 100 & $2.87 \times 10^{-11} \pm 3.76 \times 10^{-12}$ & $1.52 \times 10^{-4}$ & $2.52 \times 10^{-11} \pm 7.03 \times 10^{-12}$ & $1.45 \times 10^{-4}$ &$\mathbf{2.37 \times 10^{-11} \pm 5.08 \times 10^{-13}}$ & $1.40 \times 10^{-4}$  \\
		\hline
		& 20 & $\mathbf{8.11 \times 10^{-3} \pm 6.86 \times 10^{-3}}$ & $2.41 \times 10^{-8}$ & $1.29 \times 10^{-2} \pm 1.44 \times 10^{-2}$ & $2.31 \times 10^{-8}$ & $7.27 \times 10^{-2} \pm 8.60 \times 10^{-2}$ & $2.88 \times 10^{-8}$  \\
		$F_{4}$& 30 & $\mathbf{4.98 \times 10^{-15} \pm 6.87 \times 10^{-16}}$ & $3.78 \times 10^{-8}$ & $1.12 \times 10^{-14} \pm 5.75 \times 10^{-15}$ & $4.17 \times 10^{-8}$ & $1.06 \times 10^{-14} \pm 5.40 \times 10^{-15}$ & $3.80 \times 10^{-8}$  \\
		& 50 & $\mathbf{1.76 \times 10^{-14} \pm 6.50 \times 10^{-15}}$ & $4.23 \times 10^{-8}$ & $2.13 \times 10^{-14} \pm 5.84 \times 10^{-15}$ & $4.38 \times 10^{-8}$ & $1.99 \times 10^{-14} \pm 1.33 \times 10^{-15}$ & $4.35 \times 10^{-8}$  \\
		& 100 & $1.13 \times 10^{-13} \pm 3.22 \times 10^{-14}$ & $7.49 \times 10^{-8}$ & $\mathbf{7.90 \times 10^{-14} \pm 1.39 \times 10^{-14}}$ & $7.61 \times 10^{-8}$ & $9.27 \times 10^{-14} \pm 3.02 \times 10^{-14}$ & $7.06 \times 10^{-8}$  \\
		\hline
		& 20 & $\mathbf{2.11 \times 10^{-9} \pm 6.36 \times 10^{-10}}$ & $2.11 \times 10^{-4}$ & $2.69 \times 10^{-9} \pm 1.69 \times 10^{-10}$ & $2.69 \times 10^{-4}$ & $2.84 \times 10^{-9} \pm 1.50 \times 10^{-10}$ & $2.84 \times 10^{-4}$  \\
		$F_{5}$& 30 & $3.32 \times 10^{-9} \pm 7.57 \times 10^{-10}$ & $3.32 \times 10^{-4}$ & $3.15 \times 10^{-9} \pm 5.98 \times 10^{-10}$ & $3.15 \times 10^{-4}$ & $\mathbf{2.07 \times 10^{-9} \pm 3.07 \times 10^{-10}}$ & $2.07 \times 10^{-4}$  \\
		& 50 & $\mathbf{3.12 \times 10^{-9} \pm 5.22 \times 10^{-10}}$ & $3.12 \times 10^{-4}$ & $5.73 \times 10^{-9} \pm 2.92 \times 10^{-9}$ & $5.73 \times 10^{-4}$ & $4.50 \times 10^{-9} \pm 1.15 \times 10^{-11}$ & $4.50 \times 10^{-4}$  \\
		& 100 & $3.84 \times 10^{-9} \pm 1.06 \times 10^{-9}$ & $2.43 \times 10^{-4}$ & $4.48 \times 10^{-9} \pm 3.35 \times 10^{-10}$ & $4.48 \times 10^{-4}$ & $\mathbf{3.72 \times 10^{-9} \pm 1.45 \times 10^{-9}}$ & $2.80 \times 10^{-4}$  \\
		\hline
		& 20 & $\mathbf{5.07 \times 10^{-15} \pm 5.13 \times 10^{-16}}$ & $5.28 \times 10^{-7}$ & $1.44 \times 10^{-14} \pm 8.06 \times 10^{-15}$ & $4.04 \times 10^{-7}$ & $8.93 \times 10^{-15} \pm 5.04 \times 10^{-15}$ & $2.98 \times 10^{-7}$  \\
		$F_{6}$& 30 & $6.40 \times 10^{-14} \pm 4.71 \times 10^{-14}$ & $7.30 \times 10^{-7}$ & $3.55 \times 10^{-14} \pm 1.44 \times 10^{-14}$ & $6.06 \times 10^{-7}$ & $\mathbf{2.35 \times 10^{-14} \pm 1.22 \times 10^{-14}}$ & $6.72 \times 10^{-7}$  \\
		& 50 & $6.66 \times 10^{-14} \pm 5.15 \times 10^{-14}$ & $1.65 \times 10^{-6}$ & $1.02 \times 10^{-13} \pm 4.75 \times 10^{-14}$ & $1.55 \times 10^{-6}$ & $\mathbf{5.68 \times 10^{-14} \pm 1.54 \times 10^{-14}}$ & $8.01 \times 10^{-7}$  \\
		& 100 & $1.33 \times 10^{-12} \pm 3.44 \times 10^{-13}$ & $2.65 \times 10^{-6}$ & $\mathbf{7.54 \times 10^{-13} \pm 1.81 \times 10^{-13}}$ & $6.87 \times 10^{-6}$ & $7.96 \times 10^{-13} \pm 3.25 \times 10^{-13}$ & $9.37 \times 10^{-6}$  \\
		\hline
		& 20 & $-1.96 \times 10^{1} \pm 5.40 \times 10^{-4}$ & $2.42 \times 10^{-5}$ & $-1.96 \times 10^{1} \pm 1.04 \times 10^{-2}$ & $1.83 \times 10^{-5}$ & $\mathbf{-1.96 \times 10^{1} \pm 7.73 \times 10^{-5}}$ & $2.15 \times 10^{-5}$  \\
		$F_{7}$& 30 & $-2.96 \times 10^{1} \pm 1.02 \times 10^{-2}$ & $6.16 \times 10^{-5}$ & $\mathbf{-2.96 \times 10^{1} \pm 3.61 \times 10^{-3}}$ & $5.41 \times 10^{-5}$ & $-2.96 \times 10^{1} \pm 5.55 \times 10^{-3}$ & $5.70 \times 10^{-5}$  \\
		& 50 & $-4.96 \times 10^{1} \pm 2.66 \times 10^{-3}$ & $2.61 \times 10^{-4}$ & $-4.96 \times 10^{1} \pm 6.72 \times 10^{-3}$ & $2.60 \times 10^{-4}$ & $\mathbf{-4.96 \times 10^{1} \pm 1.16 \times 10^{-3}}$ & $1.99 \times 10^{-4}$  \\
		& 100 & $-9.96 \times 10^{1} \pm 1.46 \times 10^{-3}$ & $1.70 \times 10^{-3}$ & $-9.96 \times 10^{1} \pm 3.02 \times 10^{-3}$ & $1.07 \times 10^{-3}$ & $\mathbf{-9.96 \times 10^{1} \pm 1.14 \times 10^{-3}}$ & $1.30 \times 10^{-3}$  \\
		\hline
		& 20 & $\mathbf{9.35 \times 10^{-10} \pm 9.83 \times 10^{-11}}$ & $4.05 \times 10^{-5}$ & $1.19 \times 10^{-9} \pm 1.04 \times 10^{-9}$ & $3.89 \times 10^{-5}$ & $1.00 \times 10^{-9} \pm 3.13 \times 10^{-10}$ & $3.94 \times 10^{-5}$  \\
		$F_{8}$& 30 & $6.65 \times 10^{-8} \pm 3.79 \times 10^{-8}$ & $1.67 \times 10^{-4}$ & $\mathbf{2.47 \times 10^{-8} \pm 8.85 \times 10^{-9}}$ & $1.38 \times 10^{-4}$ & $3.53 \times 10^{-8} \pm 8.36 \times 10^{-9}$ & $1.56 \times 10^{-4}$  \\
		& 50 & $7.01 \times 10^{-6} \pm 3.82 \times 10^{-6}$ & $1.28 \times 10^{-3}$ & $3.46 \times 10^{-6} \pm 1.45 \times 10^{-6}$ & $1.23 \times 10^{-3}$ & $\mathbf{2.27 \times 10^{-6} \pm 1.18 \times 10^{-6}}$ & $1.27 \times 10^{-3}$  \\
		& 100 & $2.74 \times 10^{-2} \pm 3.73 \times 10^{-3}$ & $2.74 \times 10^{-2}$ & $\mathbf{1.23 \times 10^{-3} \pm 8.91 \times 10^{-4}}$ & $2.93 \times 10^{-2}$ & $3.28 \times 10^{-3} \pm 1.52 \times 10^{-3}$ & $2.52 \times 10^{-2}$  \\
		\hline
		& 20 & $\mathbf{7.76 \times 10^{-15} \pm 3.46 \times 10^{-15}}$ & $2.68 \times 10^{-7}$ & $8.58 \times 10^{-15} \pm 2.10 \times 10^{-15}$ & $1.99 \times 10^{-7}$ & $7.86 \times 10^{-15} \pm 3.70 \times 10^{-15}$ & $1.71 \times 10^{-7}$  \\
		$F_{9}$& 30 & $\mathbf{1.25 \times 10^{-14} \pm 1.81 \times 10^{-15}}$ & $3.99 \times 10^{-7}$ & $1.47 \times 10^{-14} \pm 2.66 \times 10^{-15}$ & $2.51 \times 10^{-7}$ & $2.22 \times 10^{-14} \pm 1.20 \times 10^{-14}$ & $3.01 \times 10^{-7}$  \\
		& 50 & $7.55 \times 10^{-14} \pm 3.06 \times 10^{-14}$ & $7.56 \times 10^{-7}$ & $\mathbf{4.02 \times 10^{-14} \pm 9.35 \times 10^{-15}}$ & $5.90 \times 10^{-7}$ & $6.74 \times 10^{-14} \pm 1.81 \times 10^{-14}$ & $5.80 \times 10^{-7}$  \\
		& 100 & $4.07 \times 10^{-13} \pm 1.56 \times 10^{-13}$ & $1.70 \times 10^{-6}$ & $\mathbf{2.89 \times 10^{-13} \pm 1.59 \times 10^{-13}}$ & $1.46 \times 10^{-6}$ & $3.04 \times 10^{-13} \pm 4.32 \times 10^{-14}$ & $1.25 \times 10^{-6}$  \\
		\hline
		& 20 & $\mathbf{1.18 \times 10^{-15} \pm 2.26 \times 10^{-16}}$ & $1.56 \times 10^{-7}$ & $1.83 \times 10^{-15} \pm 4.11 \times 10^{-16}$ & $1.72 \times 10^{-7}$ & $2.22 \times 10^{-15} \pm 9.92 \times 10^{-16}$ & $2.02 \times 10^{-7}$  \\
		$F_{10}$& 30 & $4.13 \times 10^{-15} \pm 1.13 \times 10^{-15}$ & $2.74 \times 10^{-7}$ & $\mathbf{3.84 \times 10^{-15} \pm 1.46 \times 10^{-15}}$ & $2.66 \times 10^{-7}$ & $5.19 \times 10^{-15} \pm 2.30 \times 10^{-15}$ & $2.75 \times 10^{-7}$  \\
		& 50 & $1.22 \times 10^{-14} \pm 6.24 \times 10^{-15}$ & $4.32 \times 10^{-7}$ & $\mathbf{6.37 \times 10^{-15} \pm 3.74 \times 10^{-15}}$ & $3.22 \times 10^{-7}$ & $1.05 \times 10^{-14} \pm 8.88 \times 10^{-16}$ & $4.11 \times 10^{-7}$  \\
		& 100 & $2.57 \times 10^{-14} \pm 3.95 \times 10^{-15}$ & $5.73 \times 10^{-7}$ & $\mathbf{1.75 \times 10^{-14} \pm 7.55 \times 10^{-16}}$ & $5.07 \times 10^{-7}$ & $2.28 \times 10^{-14} \pm 2.23 \times 10^{-15}$ & $5.47 \times 10^{-7}$  \\
		\bottomrule[1pt]
		\hline
	\end{tabular}
\end{table*}

\begin{table*}[!htbp]
	\centering
	\caption{Wilcoxon Rank-Sum Test Results of Different Predictive Models}
	\label{tab:ranksum_predictive_model}
	\begin{threeparttable}
		\renewcommand{\arraystretch}{1.2}
		\begin{tabular}{lccc ccc ccc ccc}
			\toprule
			Type & \multicolumn{3}{c}{20D} & \multicolumn{3}{c}{30D} & \multicolumn{3}{c}{50D} & \multicolumn{3}{c}{100D} \\
			\cmidrule(lr){2-4} \cmidrule(lr){5-7} \cmidrule(lr){8-10} \cmidrule(lr){11-13}
			& \#\texttt{+} & \#\texttt{=} & \#\texttt{-}
			& \#\texttt{+} & \#\texttt{=} & \#\texttt{-}
			& \#\texttt{+} & \#\texttt{=} & \#\texttt{-}
			& \#\texttt{+} & \#\texttt{=} & \#\texttt{-} \\
			\midrule
			First-order model      &  5 & 15 & 0 & 2 & 15 & 3 & 2 & 11 & 7 & 0 & 12 & 8\\
			Second-order model      &  2 & 16 & 2 & 3 & 15 & 2 & 4 & 12 & 4 & 6 & 14 & 0\\
			Hybrid model         &  0 & 15 & 5 & 2 & 16 & 2 & 7 & 11 & 2 & 4 & 14 & 2\\
			\bottomrule
		\end{tabular}
		\begin{tablenotes}
			\footnotesize
			\item 	Note: \texttt{\#+}, \texttt{\#=}, and \texttt{\#-} represent the count of instances where one model performs significantly better than, are statistically equivalent to, and are significantly worse than its counterparts, respectively.
		\end{tablenotes}
	\end{threeparttable}
\end{table*}

\subsection{Comparison with STA  variants}
In this subsection, the proposed effective STAs are compared with STA variants. The termination condition is set to maxFEs = 2e3$n \log_2(n)$.
The experimental results are shown in Table \ref{tab:STA_variants}, Table \ref{tab:ranksum_STA_variants} and Fig. \ref{fig:STA_variants}. It is observed that, with the prescribed maxFEs, the standard STA performs well on $F_1, F_3- F_6$ and $F_9- F_{15}$, but not on $F_2, F_7$ and$F_8$. In other words, the standard STA converges slowly on flat functions.
The DaSTA enhances the standard STA in both search capability and solution accuracy on non-flat functions, but it requires more time due to the use of probability in accepting a worse solution. Compared with standard STA, the POSTA alleviates the difficulty with flat functions and also improves solution accuracy on non-flat functions.
Compared with standard STA and its other variants, it can be observed that the proposed efficient STAs exhibit a markedly improved convergence rate in the later stages, especially for the flat functions ($F_2, F_7$ and $F_8$), primarily attributed to the proposed new translation transformation based on predictive modeling.

The proposed efficient STA variants achieve superior overall performance, including enhanced search capability, convergence rate, and solution accuracy.
Notably, ESTA and EXSTA outperform other STA variants in all cases.

\begin{table*}[!htbp]
	\centering
	\caption{Performance Comparison of STA Variants under the maxFEs Termination Criterion}
	\label{tab:STA_variants}
	\resizebox{\textwidth}{!}{%
		{\fontsize{4pt}{6pt}\selectfont
			\begin{tabular}{l|c|c|c|c|c|c}
				\hline
				\toprule[1pt]
				Fcn & Dim & STA & DaSTA & POSTA & ESTA & EXSTA\\
				\hline
				& 20 & $8.04 \times 10^{-12} \pm 2.75 \times 10^{-12}$ & $7.63 \times 10^{-18} \pm 2.86 \times 10^{-18}$ & $3.86 \times 10^{-20} \pm 1.51 \times 10^{-20}$ & $1.34 \times 10^{-18} \pm 7.43 \times 10^{-19}$ & $\mathbf{1.06 \times 10^{-22} \pm 8.15 \times 10^{-23}}$ \\
				$F_{1}$& 30 & $1.12 \times 10^{-11} \pm 2.79 \times 10^{-12}$ & $9.51 \times 10^{-18} \pm 2.81 \times 10^{-18}$ & $4.60 \times 10^{-20} \pm 1.35 \times 10^{-20}$ & $2.09 \times 10^{-18} \pm 8.84 \times 10^{-19}$ & $\mathbf{7.62 \times 10^{-22} \pm 6.23 \times 10^{-22}}$ \\
				& 50 & $1.36 \times 10^{-11} \pm 2.70 \times 10^{-12}$ & $1.31 \times 10^{-17} \pm 3.84 \times 10^{-18}$ & $7.47 \times 10^{-20} \pm 1.79 \times 10^{-20}$ & $4.03 \times 10^{-18} \pm 1.51 \times 10^{-18}$ & $\mathbf{1.16 \times 10^{-20} \pm 5.18 \times 10^{-21}}$ \\
				& 100 & $1.97 \times 10^{-11} \pm 3.02 \times 10^{-12}$ & $1.80 \times 10^{-17} \pm 3.41 \times 10^{-18}$ & $1.13 \times 10^{-19} \pm 2.42 \times 10^{-20}$ & $7.21 \times 10^{-18} \pm 1.94 \times 10^{-18}$ & $\mathbf{3.48 \times 10^{-20} \pm 9.37 \times 10^{-21}}$ \\
				\hline
				& 20 & $8.57 \pm 5.53$ & $1.28 \times 10^{1} \pm 2.65$ & $1.15 \times 10^{1} \pm 2.94$ & $2.17 \times 10^{-14} \pm 2.13 \times 10^{-14}$ & $\mathbf{5.14 \times 10^{-16} \pm 8.22 \times 10^{-16}}$ \\
				$F_{2}$& 30 & $5.08 \times 10^{1} \pm 3.46 \times 10^{1}$ & $1.96 \times 10^{1} \pm 3.71$ & $2.05 \times 10^{1} \pm 2.06$ & $9.18 \times 10^{-14} \pm 1.60 \times 10^{-13}$ & $\mathbf{5.36 \times 10^{-17} \pm 9.21 \times 10^{-17}}$ \\
				& 50 & $8.87 \times 10^{1} \pm 5.10 \times 10^{1}$ & $3.46 \times 10^{1} \pm 7.64$ & $3.00 \times 10^{1} \pm 1.42 \times 10^{1}$ & $1.45 \times 10^{-13} \pm 1.27 \times 10^{-13}$ & $\mathbf{3.81 \times 10^{-16} \pm 4.97 \times 10^{-16}}$ \\
				& 100 & $2.29 \times 10^{2} \pm 9.65 \times 10^{1}$ & $8.59 \times 10^{1} \pm 2.69 \times 10^{1}$ & $8.26 \times 10^{1} \pm 3.94 \times 10^{1}$ & $1.21 \times 10^{-11} \pm 2.12 \times 10^{-11}$ & $\mathbf{3.00 \times 10^{-15} \pm 6.83 \times 10^{-15}}$ \\
				\hline
				& 20 & $3.14 \times 10^{-9} \pm 1.49 \times 10^{-9}$ & $3.65 \times 10^{-14} \pm 2.66 \times 10^{-14}$ & $3.04 \times 10^{-14} \pm 3.31 \times 10^{-14}$ & $3.76 \times 10^{-14} \pm 2.79 \times 10^{-14}$ & $\mathbf{0 \pm 0}$ \\
				$F_{3}$& 30 & $8.65 \times 10^{-9} \pm 4.61 \times 10^{-9}$ & $1.19 \times 10^{-13} \pm 5.85 \times 10^{-14}$ & $\mathbf{1.14 \times 10^{-13} \pm 0}$ & $1.59 \times 10^{-13} \pm 6.23 \times 10^{-14}$ & $1.35 \times 10^{-13} \pm 7.23 \times 10^{-14}$ \\
				& 50 & $3.95 \times 10^{-8} \pm 5.24 \times 10^{-8}$ & $2.82 \times 10^{-13} \pm 1.30 \times 10^{-13}$ & $2.10 \times 10^{-13} \pm 7.78 \times 10^{-14}$ & $3.39 \times 10^{-13} \pm 1.75 \times 10^{-13}$ & $\mathbf{1.75 \times 10^{-13} \pm 6.61 \times 10^{-14}}$ \\
				& 100 & $2.31 \pm 8.92 \times 10^{-1}$ & $2.29 \times 10^{-12} \pm 4.89 \times 10^{-13}$ & $\mathbf{8.59 \times 10^{-13} \pm 2.32 \times 10^{-13}}$ & $2.99 \times 10^{-12} \pm 8.24 \times 10^{-13}$ & $1.03 \times 10^{-12} \pm 2.93 \times 10^{-13}$ \\
				\hline
				& 20 & $3.97 \times 10^{-2} \pm 6.42 \times 10^{-2}$ & $6.52 \times 10^{-2} \pm 6.42 \times 10^{-2}$ & $2.29 \times 10^{-1} \pm 2.77 \times 10^{-1}$ & $9.16 \times 10^{-3} \pm 1.50 \times 10^{-2}$ & $\mathbf{0 \pm 0}$ \\
				$F_{4}$& 30 & $2.83 \times 10^{-12} \pm 9.22 \times 10^{-13}$ & $1.89 \times 10^{-2} \pm 1.88 \times 10^{-2}$ & $\mathbf{0 \pm 0}$ & $5.38 \times 10^{-16} \pm 2.38 \times 10^{-16}$ & $\mathbf{0 \pm 0}$ \\
				& 50 & $3.35 \times 10^{-12} \pm 8.85 \times 10^{-13}$ & $1.51 \times 10^{-2} \pm 1.74 \times 10^{-2}$ & $\mathbf{0 \pm 0}$ & $1.70 \times 10^{-15} \pm 6.61 \times 10^{-16}$ & $\mathbf{0 \pm 0}$ \\
				& 100 & $3.95 \times 10^{-12} \pm 7.01 \times 10^{-13}$ & $1.03 \times 10^{-2} \pm 9.24 \times 10^{-3}$ & $\mathbf{0 \pm 0}$ & $8.68 \times 10^{-15} \pm 2.68 \times 10^{-15}$ & $\mathbf{0 \pm 0}$ \\
				\hline
				& 20 & $2.88 \times 10^{-6} \pm 3.25 \times 10^{-7}$ & $3.32 \times 10^{-9} \pm 8.38 \times 10^{-10}$ & $2.01 \times 10^{-10} \pm 4.20 \times 10^{-11}$ & $1.12 \times 10^{-9} \pm 3.93 \times 10^{-10}$ & $\mathbf{1.55 \times 10^{-11} \pm 7.27 \times 10^{-12}}$ \\
				$F_{5}$& 30 & $2.73 \times 10^{-6} \pm 4.91 \times 10^{-7}$ & $3.49 \times 10^{-9} \pm 1.01 \times 10^{-9}$ & $1.92 \times 10^{-10} \pm 2.93 \times 10^{-11}$ & $1.18 \times 10^{-9} \pm 2.98 \times 10^{-10}$ & $\mathbf{2.00 \times 10^{-11} \pm 7.14 \times 10^{-12}}$ \\
				& 50 & $2.48 \times 10^{-6} \pm 2.73 \times 10^{-7}$ & $5.68 \times 10^{-9} \pm 1.90 \times 10^{-9}$ & $2.73 \times 10^{-10} \pm 3.63 \times 10^{-11}$ & $1.21 \times 10^{-9} \pm 1.64 \times 10^{-10}$ & $\mathbf{6.50 \times 10^{-11} \pm 1.08 \times 10^{-11}}$ \\
				& 100 & $3.67 \times 10^{-6} \pm 5.89 \times 10^{-7}$ & $2.00 \times 10^{1} \pm 2.78 \times 10^{-3}$ & $2.00 \times 10^{1} \pm 5.31 \times 10^{-5}$ & $3.82 \times 10^{-9} \pm 1.83 \times 10^{-9}$ & $\mathbf{1.61 \times 10^{-10} \pm 5.08 \times 10^{-11}}$ \\
				\hline
				& 20 & $1.70 \times 10^{-9} \pm 7.33 \times 10^{-10}$ & $1.31 \times 10^{-10} \pm 1.23 \times 10^{-10}$ & $3.68 \times 10^{-18} \pm 1.37 \times 10^{-18}$ & $8.26 \times 10^{-17} \pm 5.54 \times 10^{-17}$ & $\mathbf{1.04 \times 10^{-19} \pm 7.43 \times 10^{-20}}$ \\
				$F_{6}$& 30 & $2.20 \times 10^{-8} \pm 8.97 \times 10^{-9}$ & $4.85 \times 10^{-7} \pm 3.77 \times 10^{-7}$ & $3.68 \times 10^{-17} \pm 1.34 \times 10^{-17}$ & $3.15 \times 10^{-16} \pm 1.29 \times 10^{-16}$ & $\mathbf{1.58 \times 10^{-18} \pm 8.80 \times 10^{-19}}$ \\
				& 50 & $1.31 \times 10^{-3} \pm 1.03 \times 10^{-3}$ & $7.40 \times 10^{-3} \pm 6.48 \times 10^{-3}$ & $2.91 \times 10^{-11} \pm 2.57 \times 10^{-11}$ & $2.58 \times 10^{-15} \pm 8.29 \times 10^{-16}$ & $\mathbf{3.88 \times 10^{-17} \pm 1.59 \times 10^{-17}}$ \\
				& 100 & $6.17 \times 10^{4} \pm 1.33 \times 10^{4}$ & $1.33 \times 10^{4} \pm 5.14 \times 10^{3}$ & $3.33 \times 10^{1} \pm 3.01 \times 10^{1}$ & $\mathbf{6.92 \times 10^{-14} \pm 2.00 \times 10^{-14}}$ & $1.43 \times 10^{-13} \pm 8.55 \times 10^{-14}$ \\
				\hline
				& 20 & $-1.86 \times 10^{1} \pm 1.03$ & $-1.88 \times 10^{1} \pm 9.30 \times 10^{-1}$ & $-1.86 \times 10^{1} \pm 1.32$ & $-1.96 \times 10^{1} \pm 7.44 \times 10^{-3}$ & $\mathbf{-1.96 \times 10^{1} \pm 2.63 \times 10^{-3}}$ \\
				$F_{7}$& 30 & $-2.89 \times 10^{1} \pm 6.72 \times 10^{-1}$ & $-2.79 \times 10^{1} \pm 1.26$ & $-2.89 \times 10^{1} \pm 7.36 \times 10^{-1}$ & $\mathbf{-2.96 \times 10^{1} \pm 1.47 \times 10^{-2}}$ & $-2.96 \times 10^{1} \pm 1.51 \times 10^{-2}$ \\
				& 50 & $-4.86 \times 10^{1} \pm 8.59 \times 10^{-1}$ & $-4.70 \times 10^{1} \pm 1.55$ & $-4.86 \times 10^{1} \pm 8.84 \times 10^{-1}$ & $\mathbf{-4.96 \times 10^{1} \pm 1.69 \times 10^{-2}}$ & $-4.96 \times 10^{1} \pm 1.71 \times 10^{-2}$ \\
				& 100 & $-9.82 \times 10^{1} \pm 1.26$ & $-9.62 \times 10^{1} \pm 1.75$ & $-9.83 \times 10^{1} \pm 7.07 \times 10^{-1}$ & $-9.95 \times 10^{1} \pm 1.86 \times 10^{-2}$ & $\mathbf{-9.96 \times 10^{1} \pm 1.54 \times 10^{-2}}$ \\
				\hline
				& 20 & $1.61 \pm 1.78$ & $2.94 \times 10^{-2} \pm 2.04 \times 10^{-2}$ & $1.47 \times 10^{-2} \pm 9.11 \times 10^{-3}$ & $2.96 \times 10^{-10} \pm 2.04 \times 10^{-10}$ & $\mathbf{4.99 \times 10^{-11} \pm 4.81 \times 10^{-11}}$ \\
				$F_{8}$& 30 & $2.06 \times 10^{2} \pm 1.77 \times 10^{2}$ & $1.20 \times 10^{1} \pm 1.01 \times 10^{1}$ & $1.06 \times 10^{1} \pm 8.79$ & $8.31 \times 10^{-9} \pm 4.60 \times 10^{-9}$ & $\mathbf{1.29 \times 10^{-9} \pm 9.33 \times 10^{-10}}$ \\
				& 50 & $6.28 \times 10^{3} \pm 4.11 \times 10^{3}$ & $1.17 \times 10^{3} \pm 8.71 \times 10^{2}$ & $2.68 \times 10^{2} \pm 2.82 \times 10^{2}$ & $1.28 \times 10^{-6} \pm 8.25 \times 10^{-7}$ & $\mathbf{1.13 \times 10^{-7} \pm 6.30 \times 10^{-8}}$ \\
				& 100 & $3.57 \times 10^{5} \pm 3.55 \times 10^{5}$ & $6.49 \times 10^{4} \pm 2.68 \times 10^{4}$ & $2.78 \times 10^{4} \pm 2.45 \times 10^{4}$ & $2.67 \times 10^{-2} \pm 2.42 \times 10^{-2}$ & $\mathbf{7.56 \times 10^{-5} \pm 4.93 \times 10^{-5}}$ \\
				\hline
				& 20 & $1.08 \times 10^{-7} \pm 6.31 \times 10^{-8}$ & $2.15 \times 10^{-5} \pm 1.86 \times 10^{-5}$ & $4.91 \times 10^{-11} \pm 5.47 \times 10^{-11}$ & $1.49 \times 10^{-16} \pm 9.39 \times 10^{-17}$ & $\mathbf{3.75 \times 10^{-19} \pm 1.76 \times 10^{-19}}$ \\
				$F_{9}$& 30 & $1.15 \times 10^{-5} \pm 4.89 \times 10^{-6}$ & $5.36 \times 10^{-4} \pm 2.43 \times 10^{-4}$ & $6.77 \times 10^{-8} \pm 6.07 \times 10^{-8}$ & $9.43 \times 10^{-16} \pm 4.56 \times 10^{-16}$ & $\mathbf{6.32 \times 10^{-18} \pm 3.19 \times 10^{-18}}$ \\
				& 50 & $4.52 \times 10^{-3} \pm 1.99 \times 10^{-3}$ & $2.39 \times 10^{-2} \pm 9.35 \times 10^{-3}$ & $1.14 \times 10^{-4} \pm 7.43 \times 10^{-5}$ & $8.52 \times 10^{-15} \pm 3.24 \times 10^{-15}$ & $\mathbf{8.06 \times 10^{-17} \pm 2.15 \times 10^{-17}}$ \\
				& 100 & $8.46 \pm 2.74$ & $3.56 \pm 1.03$ & $1.41 \times 10^{-1} \pm 4.27 \times 10^{-2}$ & $2.84 \times 10^{-13} \pm 5.21 \times 10^{-14}$ & $\mathbf{1.50 \times 10^{-15} \pm 3.57 \times 10^{-16}}$ \\
				\hline
				& 20 & $2.18 \times 10^{-10} \pm 1.21 \times 10^{-10}$ & $4.17 \times 10^{-14} \pm 3.60 \times 10^{-14}$ & $7.63 \times 10^{-19} \pm 3.09 \times 10^{-19}$ & $1.03 \times 10^{-17} \pm 5.62 \times 10^{-18}$ & $\mathbf{1.05 \times 10^{-21} \pm 9.21 \times 10^{-22}}$ \\
				$F_{10}$& 30 & $3.80 \times 10^{-10} \pm 1.37 \times 10^{-10}$ & $6.20 \times 10^{-14} \pm 4.61 \times 10^{-14}$ & $1.57 \times 10^{-18} \pm 5.88 \times 10^{-19}$ & $2.29 \times 10^{-17} \pm 1.36 \times 10^{-17}$ & $\mathbf{3.51 \times 10^{-21} \pm 2.25 \times 10^{-21}}$ \\
				& 50 & $5.80 \times 10^{-10} \pm 1.94 \times 10^{-10}$ & $6.41 \times 10^{-14} \pm 3.32 \times 10^{-14}$ & $2.17 \times 10^{-18} \pm 5.75 \times 10^{-19}$ & $4.10 \times 10^{-17} \pm 1.57 \times 10^{-17}$ & $\mathbf{7.43 \times 10^{-20} \pm 2.82 \times 10^{-20}}$ \\
				& 100 & $1.13 \times 10^{-9} \pm 2.02 \times 10^{-10}$ & $1.15 \times 10^{-13} \pm 4.19 \times 10^{-14}$ & $4.34 \times 10^{-18} \pm 8.92 \times 10^{-19}$ & $7.28 \times 10^{-17} \pm 2.06 \times 10^{-17}$ & $\mathbf{3.34 \times 10^{-19} \pm 1.34 \times 10^{-19}}$ \\
				\hline
				& 20 & $2.03 \times 10^{-2} \pm 9.95 \times 10^{-3}$ & $1.85 \times 10^{-5} \pm 1.30 \times 10^{-5}$ & $3.29 \times 10^{-8} \pm 2.52 \times 10^{-8}$ & $2.98 \times 10^{-8} \pm 1.44 \times 10^{-8}$ & $\mathbf{4.76 \times 10^{-9} \pm 3.51 \times 10^{-9}}$ \\
				$F_{11}$& 30 & $1.04 \times 10^{-1} \pm 3.33 \times 10^{-2}$ & $6.60 \times 10^{-4} \pm 3.86 \times 10^{-4}$ & $1.17 \times 10^{-4} \pm 1.14 \times 10^{-4}$ & $\mathbf{1.92 \times 10^{-7} \pm 8.18 \times 10^{-8}}$ & $1.41 \times 10^{-6} \pm 1.04 \times 10^{-6}$ \\
				& 50 & $3.54 \times 10^{-1} \pm 7.46 \times 10^{-2}$ & $1.60 \times 10^{-1} \pm 1.32 \times 10^{-1}$ & $2.81 \times 10^{-2} \pm 1.48 \times 10^{-2}$ & $\mathbf{2.13 \times 10^{-3} \pm 2.60 \times 10^{-3}}$ & $2.43 \times 10^{-3} \pm 1.41 \times 10^{-3}$ \\
				& 100 & $1.63 \pm 2.12 \times 10^{-1}$ & $1.90 \times 10^{1} \pm 3.61$ & $2.74 \pm 6.02 \times 10^{-1}$ & $\mathbf{2.85 \times 10^{-1} \pm 4.42 \times 10^{-2}}$ & $7.30 \times 10^{-1} \pm 3.31 \times 10^{-1}$ \\
				\hline
				& 20 & $8.66 \times 10^{-3} \pm 5.26 \times 10^{-3}$ & $1.53 \times 10^{-8} \pm 4.57 \times 10^{-9}$ & $1.72 \times 10^{-9} \pm 6.92 \times 10^{-10}$ & $9.91 \times 10^{-8} \pm 7.70 \times 10^{-8}$ & $\mathbf{2.19 \times 10^{-10} \pm 1.29 \times 10^{-10}}$ \\
				$F_{12}$& 30 & $3.15 \times 10^{-2} \pm 1.41 \times 10^{-2}$ & $3.42 \times 10^{-8} \pm 9.83 \times 10^{-9}$ & $3.83 \times 10^{-9} \pm 1.82 \times 10^{-9}$ & $2.51 \times 10^{-6} \pm 2.88 \times 10^{-6}$ & $\mathbf{4.04 \times 10^{-10} \pm 1.71 \times 10^{-10}}$ \\
				& 50 & $6.11 \times 10^{-2} \pm 2.19 \times 10^{-2}$ & $8.01 \times 10^{-8} \pm 2.73 \times 10^{-8}$ & $9.29 \times 10^{-9} \pm 2.72 \times 10^{-9}$ & $3.49 \times 10^{-5} \pm 4.01 \times 10^{-5}$ & $\mathbf{7.00 \times 10^{-10} \pm 1.88 \times 10^{-10}}$ \\
				& 100 & $2.13 \times 10^{-1} \pm 4.75 \times 10^{-2}$ & $2.28 \times 10^{-7} \pm 4.18 \times 10^{-8}$ & $4.18 \times 10^{-8} \pm 1.21 \times 10^{-8}$ & $3.19 \times 10^{-4} \pm 3.87 \times 10^{-4}$ & $\mathbf{1.36 \times 10^{-9} \pm 1.79 \times 10^{-10}}$ \\
				\hline
				& 20 & $1.13 \times 10^{-11} \pm 5.95 \times 10^{-12}$ & $2.41 \times 10^{3} \pm 6.47 \times 10^{2}$ & $6.64 \times 10^{2} \pm 4.27 \times 10^{2}$ & $\mathbf{1.82 \times 10^{-12} \pm 0}$ & $2.97 \times 10^{-12} \pm 1.12 \times 10^{-12}$ \\
				$F_{13}$& 30 & $3.79 \times 10^{-11} \pm 6.89 \times 10^{-11}$ & $3.14 \times 10^{3} \pm 7.71 \times 10^{2}$ & $8.64 \times 10^{2} \pm 6.67 \times 10^{2}$ & $\mathbf{1.82 \times 10^{-12} \pm 0}$ & $3.64 \times 10^{-12} \pm 1.51 \times 10^{-12}$ \\
				& 50 & $4.16 \times 10^{2} \pm 3.62 \times 10^{2}$ & $5.56 \times 10^{3} \pm 1.31 \times 10^{3}$ & $2.00 \times 10^{3} \pm 9.44 \times 10^{2}$ & $\mathbf{2.55 \times 10^{-11} \pm 0}$ & $3.61 \times 10^{-11} \pm 7.79 \times 10^{-12}$ \\
				& 100 & $6.39 \times 10^{2} \pm 5.13 \times 10^{2}$ & $1.07 \times 10^{4} \pm 1.92 \times 10^{3}$ & $4.65 \times 10^{3} \pm 1.91 \times 10^{3}$ & $\mathbf{1.51 \times 10^{-10} \pm 1.54 \times 10^{-11}}$ & $1.72 \times 10^{-10} \pm 1.69 \times 10^{-11}$ \\
				\hline
				& 20 & $2.18 \times 10^{-13} \pm 8.21 \times 10^{-14}$ & $2.33 \times 10^{-18} \pm 4.63 \times 10^{-18}$ & $8.32 \times 10^{-22} \pm 3.84 \times 10^{-22}$ & $4.10 \times 10^{-20} \pm 2.42 \times 10^{-20}$ & $\mathbf{5.12 \times 10^{-24} \pm 4.43 \times 10^{-24}}$ \\
				$F_{14}$& 30 & $1.75 \times 10^{-13} \pm 5.71 \times 10^{-14}$ & $2.96 \times 10^{-19} \pm 2.96 \times 10^{-19}$ & $7.76 \times 10^{-22} \pm 2.30 \times 10^{-22}$ & $5.03 \times 10^{-20} \pm 2.42 \times 10^{-20}$ & $\mathbf{2.58 \times 10^{-23} \pm 1.31 \times 10^{-23}}$ \\
				& 50 & $1.09 \times 10^{-13} \pm 2.63 \times 10^{-14}$ & $2.06 \times 10^{-19} \pm 7.89 \times 10^{-20}$ & $5.98 \times 10^{-22} \pm 1.54 \times 10^{-22}$ & $4.48 \times 10^{-20} \pm 1.43 \times 10^{-20}$ & $\mathbf{9.38 \times 10^{-23} \pm 3.19 \times 10^{-23}}$ \\
				& 100 & $7.41 \times 10^{-14} \pm 1.27 \times 10^{-14}$ & $1.55 \times 10^{-19} \pm 1.19 \times 10^{-19}$ & $3.75 \times 10^{-22} \pm 7.30 \times 10^{-23}$ & $2.96 \times 10^{-20} \pm 7.23 \times 10^{-21}$ & $\mathbf{1.01 \times 10^{-22} \pm 2.71 \times 10^{-23}}$ \\
				\hline
				& 20 & $1.17 \times 10^{-11} \pm 3.31 \times 10^{-12}$ & $8.51 \times 10^{-18} \pm 3.20 \times 10^{-18}$ & $3.74 \times 10^{-20} \pm 1.64 \times 10^{-20}$ & $1.46 \times 10^{-18} \pm 8.73 \times 10^{-19}$ & $\mathbf{5.55 \times 10^{-23} \pm 4.91 \times 10^{-23}}$ \\
				$F_{15}$& 30 & $8.80 \times 10^{-12} \pm 2.71 \times 10^{-12}$ & $7.08 \times 10^{-18} \pm 2.59 \times 10^{-18}$ & $3.28 \times 10^{-20} \pm 1.09 \times 10^{-20}$ & $1.78 \times 10^{-18} \pm 8.42 \times 10^{-19}$ & $\mathbf{1.81 \times 10^{-22} \pm 1.30 \times 10^{-22}}$ \\
				& 50 & $7.55 \times 10^{-12} \pm 1.77 \times 10^{-12}$ & $8.89 \times 10^{-18} \pm 2.34 \times 10^{-18}$ & $3.14 \times 10^{-20} \pm 8.39 \times 10^{-21}$ & $2.39 \times 10^{-18} \pm 7.91 \times 10^{-19}$ & $\mathbf{2.29 \times 10^{-21} \pm 7.71 \times 10^{-22}}$ \\
				& 100 & $6.85 \times 10^{-12} \pm 1.18 \times 10^{-12}$ & $8.41 \times 10^{-18} \pm 1.48 \times 10^{-18}$ & $3.19 \times 10^{-20} \pm 6.77 \times 10^{-21}$ & $2.47 \times 10^{-18} \pm 5.65 \times 10^{-19}$ & $\mathbf{6.92 \times 10^{-21} \pm 1.84 \times 10^{-21}}$ \\
				\bottomrule[1pt]
				\hline
			\end{tabular}
		}%
	}
\end{table*}

\begin{table*}[!htbp]
    \centering
    \caption{Wilcoxon Rank-Sum Test Results of STA Variants}
    \label{tab:ranksum_STA_variants}
    \begin{threeparttable}
	    \renewcommand{\arraystretch}{1.2}
	    \begin{tabular}{lccc ccc ccc ccc}
	        \toprule
	        Algorithm & \multicolumn{3}{c}{20D} & \multicolumn{3}{c}{30D} & \multicolumn{3}{c}{50D} & \multicolumn{3}{c}{100D} \\
	        \cmidrule(lr){2-4} \cmidrule(lr){5-7} \cmidrule(lr){8-10} \cmidrule(lr){11-13}
	        & \#\texttt{+} & \#\texttt{=} & \#\texttt{-}
	        & \#\texttt{+} & \#\texttt{=} & \#\texttt{-}
	        & \#\texttt{+} & \#\texttt{=} & \#\texttt{-}
	        & \#\texttt{+} & \#\texttt{=} & \#\texttt{-} \\
	        \midrule
	        STA                 &  0 & 0 & 15 & 0 & 0 & 15 & 0 & 0 & 15 &  0 & 0 & 15\\
	        DaSTA               &  1 & 1 & 13 & 2 & 0 & 13 & 1 & 1 & 13 &  2 & 0 & 13\\
	        POSTA               &  7 & 2 & 6 & 9 & 0 & 6 & 8 & 0 & 7 &  7 & 0 & 8\\
	        EXSTA               &  14 & 0 & 1 & 11 & 2 & 2 & 12 & 2 & 1 &  12 & 0 & 3\\
	        \bottomrule
	    \end{tabular}
	    \begin{tablenotes}
	    	\footnotesize
	    	\item 	Note: \texttt{\#+}, \texttt{\#=}, and \texttt{\#-} represent the count of instances where one algorithm performs significantly better than, are statistically equivalent to, and are significantly worse than its counterparts, respectively.
	    \end{tablenotes}
    \end{threeparttable}
\end{table*}

\begin{figure*}[!t]
	\centering
	
	\subfloat[]{\includegraphics[width=0.32\textwidth]{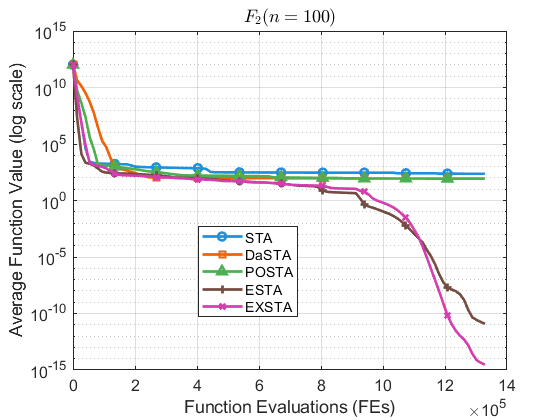}}
	\subfloat[]{\includegraphics[width=0.32\textwidth]{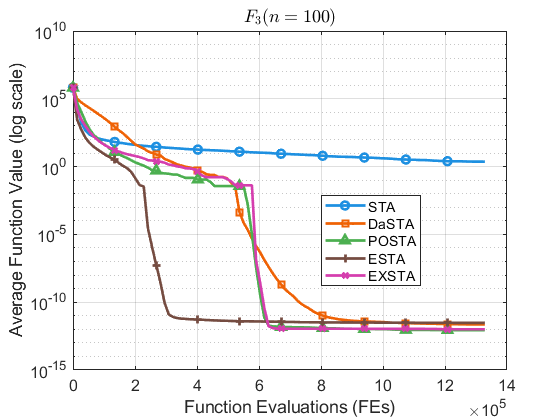}}
	\subfloat[]{\includegraphics[width=0.32\textwidth]{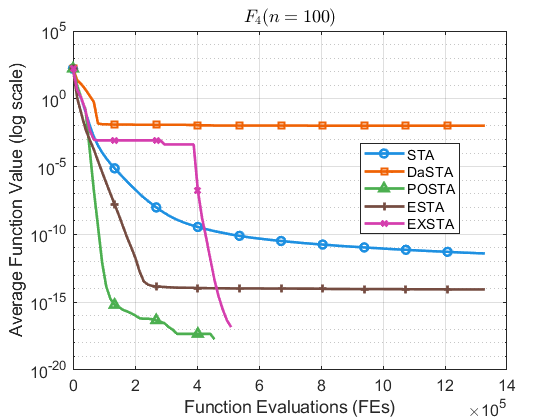}}\\[-1cm]

	\subfloat[]{\includegraphics[width=0.32\textwidth]{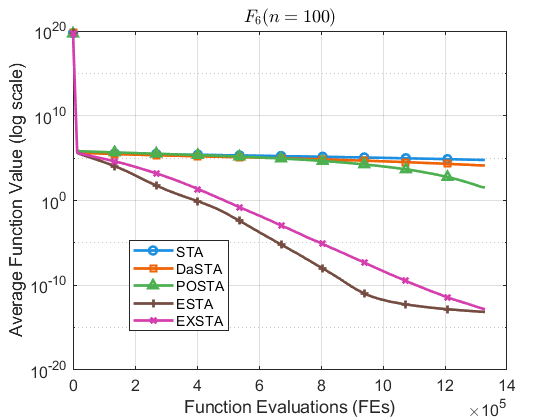}}
	\subfloat[]{\includegraphics[width=0.32\textwidth]{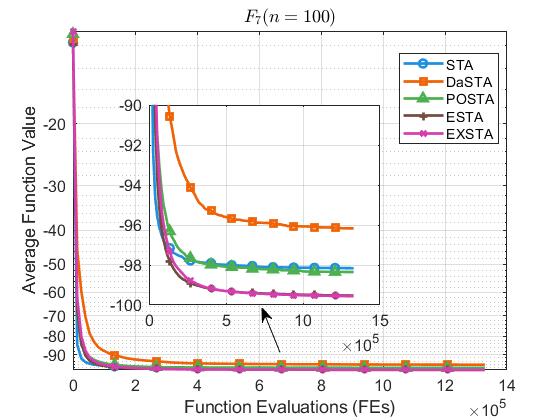}}
	\subfloat[]{\includegraphics[width=0.32\textwidth]{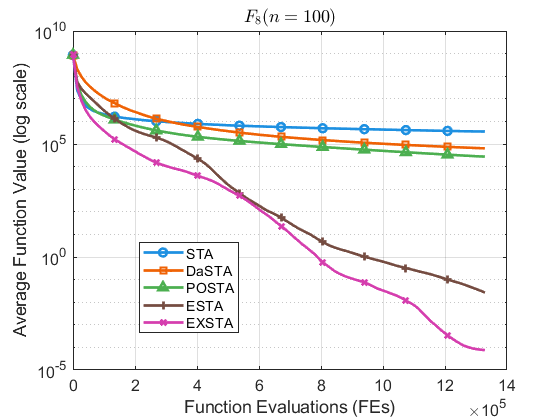}}

	\caption{Iterative curves of STA and its variants for 100-dimensional problems.}
	\label{fig:STA_variants}
\end{figure*}

\subsection{Comparison with other algorithms}
In this subsection, the proposed efficient STAs are compared with other algorithms under two termination criteria. The first criterion is the maximum number of stall iterations, used to evaluate whether the algorithm can achieve convergence to an optimal solution. The second criterion is the maximum number of function evaluations, used to assess the search capability of the algorithm.

\subsubsection{Terminate at maximum stall iterations}
In this part, the proposed method is compared with other algorithms under the termination condition of reaching the maximum number of stall iterations (maxStalls), which is set to maxStalls = 20. It is worth noting that the proposed method adopts its own designed termination condition, that is to say, in ESTA and EXSTA, maxStalls = 1. The average gradient norm (GradNorm) is reported in Table \ref{tab:gradnorm_maxstall}, with values greater than 0.1 highlighted in bold. It was found that with maxStalls = 1, both ESTA and EXSTA achieve convergence to optimality.  However, with maxStalls = 20, some algorithms are able to find the optimal solution for certain benchmark functions, but fail to do so for others. In other words, these algorithms cannot ensure convergence to optimality across all benchmark functions. A possible explanation is that some algorithms rely heavily on imitation learning or behavioral cloning, which lack the necessary mechanisms to ensure optimality, and they tend to stagnate at stable points \cite{zhou2025stagnation}.

\begin{table*}[!htbp]
	\centering
	\caption{Comparison of Average Gradient Norm (GradNorm) under the maxStalls Termination Criterion}
	\label{tab:gradnorm_maxstall}
	\tiny 
	\setlength{\tabcolsep}{3pt} 
	\begin{tabular}{l|c|c|c|c|c|c|c|c|c|c|c}
		\hline
		\toprule[1pt]
		Fcn & Dim & GL25 & CLPSO & CMA-ES & LSHADE & GWO & WOA & PGA & KLA & ESTA & EXSTA \\
		\hline
		& 20 & $\mathbf{8.74 \times 10^{1}}$ & $\mathbf{6.49 \times 10^{1}}$ & $2.04 \times 10^{-8}$ & $9.36 \times 10^{-9}$ & $\mathbf{3.12 \times 10^{1}}$ & $\mathbf{2.46}$ & $2.91 \times 10^{-8}$ & $2.37 \times 10^{-8}$ & $8.73 \times 10^{-8}$ & $4.38 \times 10^{-9}$  \\
		$F_{1}$& 30 & $\mathbf{1.87 \times 10^{2}}$ & $\mathbf{1.54 \times 10^{2}}$ & $\mathbf{9.06 \times 10^{1}}$ & $1.05 \times 10^{-8}$ & $\mathbf{6.84 \times 10^{1}}$ & $\mathbf{5.14}$ & $4.47 \times 10^{-8}$ & $3.62 \times 10^{-8}$ & $1.14 \times 10^{-7}$ & $7.49 \times 10^{-9}$  \\
		& 50 & $\mathbf{4.04 \times 10^{2}}$ & $\mathbf{4.50 \times 10^{2}}$ & $\mathbf{1.93 \times 10^{2}}$ & $1.15 \times 10^{-8}$ & $\mathbf{1.90 \times 10^{2}}$ & $\mathbf{1.48 \times 10^{1}}$ & $7.23 \times 10^{-8}$ & $6.28 \times 10^{-8}$ & $1.68 \times 10^{-7}$ & $1.47 \times 10^{-8}$  \\
		& 100 & $\mathbf{1.32 \times 10^{3}}$ & $\mathbf{1.45 \times 10^{3}}$ & $\mathbf{5.48 \times 10^{2}}$ & $1.76 \times 10^{-8}$ & $\mathbf{6.80 \times 10^{2}}$ & $\mathbf{6.17 \times 10^{1}}$ & $\mathbf{7.61 \times 10^{1}}$ & $1.41 \times 10^{-7}$ & $2.83 \times 10^{-7}$ & $2.85 \times 10^{-8}$  \\
		\hline
		& 20 & $\mathbf{6.44 \times 10^{6}}$ & $\mathbf{3.15 \times 10^{6}}$ & $4.37 \times 10^{-7}$ & $3.40 \times 10^{-7}$ & $\mathbf{1.65 \times 10^{5}}$ & $\mathbf{1.66 \times 10^{2}}$ & $\mathbf{2.83}$ & $\mathbf{6.31 \times 10^{-1}}$ & $4.10 \times 10^{-6}$ & $7.71 \times 10^{-7}$  \\
		$F_{2}$& 30 & $\mathbf{2.65 \times 10^{7}}$ & $\mathbf{2.65 \times 10^{7}}$ & $\mathbf{1.18 \times 10^{2}}$ & $3.84 \times 10^{-7}$ & $\mathbf{1.69 \times 10^{6}}$ & $\mathbf{1.02 \times 10^{3}}$ & $\mathbf{4.47}$ & $1.64 \times 10^{-2}$ & $4.56 \times 10^{-6}$ & $1.00 \times 10^{-6}$  \\
		& 50 & $\mathbf{2.40 \times 10^{8}}$ & $\mathbf{1.94 \times 10^{8}}$ & $1.18 \times 10^{-6}$ & $4.74 \times 10^{-7}$ & $\mathbf{2.50 \times 10^{7}}$ & $\mathbf{9.88 \times 10^{3}}$ & $\mathbf{5.07}$ & $1.92 \times 10^{-6}$ & $7.87 \times 10^{-6}$ & $1.45 \times 10^{-6}$  \\
		& 100 & $\mathbf{3.37 \times 10^{9}}$ & $\mathbf{4.64 \times 10^{9}}$ & $\mathbf{1.44 \times 10^{8}}$ & $9.80 \times 10^{-7}$ & $\mathbf{5.85 \times 10^{8}}$ & $\mathbf{4.39 \times 10^{5}}$ & $\mathbf{6.04 \times 10^{7}}$ & $2.76 \times 10^{-6}$ & $1.70 \times 10^{-5}$ & $3.10 \times 10^{-6}$  \\
		\hline
		& 20 & $\mathbf{2.02 \times 10^{2}}$ & $\mathbf{1.88 \times 10^{2}}$ & $\mathbf{1.87 \times 10^{2}}$ & $\mathbf{1.66 \times 10^{2}}$ & $\mathbf{1.99 \times 10^{2}}$ & $\mathbf{1.30 \times 10^{2}}$ & $\mathbf{2.11}$ & $\mathbf{1.85 \times 10^{2}}$ & $1.64 \times 10^{-5}$ & $9.74 \times 10^{-6}$  \\
		$F_{3}$& 30 & $\mathbf{3.40 \times 10^{2}}$ & $\mathbf{3.09 \times 10^{2}}$ & $\mathbf{2.37 \times 10^{2}}$ & $\mathbf{2.21 \times 10^{2}}$ & $\mathbf{2.42 \times 10^{2}}$ & $\mathbf{2.11 \times 10^{2}}$ & $\mathbf{2.22 \times 10^{2}}$ & $\mathbf{2.28 \times 10^{2}}$ & $3.18 \times 10^{-5}$ & $1.68 \times 10^{-5}$  \\
		& 50 & $\mathbf{4.93 \times 10^{2}}$ & $\mathbf{5.59 \times 10^{2}}$ & $\mathbf{3.47 \times 10^{2}}$ & $\mathbf{2.98 \times 10^{2}}$ & $\mathbf{3.59 \times 10^{2}}$ & $\mathbf{3.24 \times 10^{2}}$ & $\mathbf{1.51 \times 10^{2}}$ & $\mathbf{3.05 \times 10^{2}}$ & $5.00 \times 10^{-5}$ & $2.51 \times 10^{-5}$  \\
		& 100 & $\mathbf{1.32 \times 10^{3}}$ & $\mathbf{1.49 \times 10^{3}}$ & $\mathbf{6.96 \times 10^{2}}$ & $\mathbf{4.30 \times 10^{2}}$ & $\mathbf{8.05 \times 10^{2}}$ & $\mathbf{5.08 \times 10^{2}}$ & $\mathbf{4.16 \times 10^{2}}$ & $\mathbf{4.35 \times 10^{2}}$ & $1.40 \times 10^{-4}$ & $6.11 \times 10^{-5}$  \\
		\hline
		& 20 & $1.79 \times 10^{-2}$ & $1.72 \times 10^{-2}$ & $5.24 \times 10^{-3}$ & $1.07 \times 10^{-8}$ & $7.37 \times 10^{-3}$ & $\mathbf{1.03 \times 10^{-1}}$ & $1.11 \times 10^{-8}$ & $1.08 \times 10^{-8}$ & $2.88 \times 10^{-8}$ & $1.19 \times 10^{-8}$  \\
		$F_{4}$& 30 & $4.34 \times 10^{-2}$ & $4.22 \times 10^{-2}$ & $1.17 \times 10^{-2}$ & $1.02 \times 10^{-8}$ & $1.70 \times 10^{-2}$ & $\mathbf{1.59 \times 10^{-1}}$ & $1.49 \times 10^{-8}$ & $1.28 \times 10^{-8}$ & $3.80 \times 10^{-8}$ & $1.37 \times 10^{-8}$  \\
		& 50 & $\mathbf{1.04 \times 10^{-1}}$ & $\mathbf{1.06 \times 10^{-1}}$ & $4.37 \times 10^{-2}$ & $9.90 \times 10^{-9}$ & $4.78 \times 10^{-2}$ & $\mathbf{1.56 \times 10^{-1}}$ & $\mathbf{1.75 \times 10^{-1}}$ & $1.57 \times 10^{-8}$ & $4.35 \times 10^{-8}$ & $1.41 \times 10^{-8}$  \\
		& 100 & $\mathbf{3.39 \times 10^{-1}}$ & $\mathbf{3.56 \times 10^{-1}}$ & $\mathbf{1.44 \times 10^{-1}}$ & $1.02 \times 10^{-8}$ & $\mathbf{1.74 \times 10^{-1}}$ & $1.41 \times 10^{-2}$ & $\mathbf{1.42 \times 10^{-1}}$ & $2.48 \times 10^{-8}$ & $7.06 \times 10^{-8}$ & $1.61 \times 10^{-8}$  \\
		\hline
		& 20 & $\mathbf{1.01}$ & $\mathbf{1.12}$ & $\mathbf{1.37}$ & $6.45 \times 10^{-10}$ & $\mathbf{1.17}$ & $\mathbf{1.45}$ & $1.05 \times 10^{-9}$ & $9.90 \times 10^{-10}$ & $2.84 \times 10^{-4}$ & $1.25 \times 10^{-7}$  \\
		$F_{5}$& 30 & $\mathbf{8.89 \times 10^{-1}}$ & $\mathbf{1.17}$ & $\mathbf{1.15}$ & $1.25 \times 10^{-9}$ & $\mathbf{9.33 \times 10^{-1}}$ & $\mathbf{1.27}$ & $1.69 \times 10^{-9}$ & $2.86 \times 10^{-3}$ & $2.07 \times 10^{-4}$ & $1.35 \times 10^{-6}$  \\
		& 50 & $\mathbf{8.88 \times 10^{-1}}$ & $\mathbf{8.30 \times 10^{-1}}$ & $\mathbf{8.36 \times 10^{-1}}$ & $\mathbf{9.90 \times 10^{-1}}$ & $\mathbf{8.84 \times 10^{-1}}$ & $2.58 \times 10^{-3}$ & $\mathbf{1.32 \times 10^{-1}}$ & $2.88 \times 10^{-5}$ & $4.50 \times 10^{-4}$ & $5.50 \times 10^{-6}$  \\
		& 100 & $\mathbf{5.47 \times 10^{-1}}$ & $\mathbf{5.20 \times 10^{-1}}$ & $\mathbf{5.48 \times 10^{-1}}$ & $\mathbf{6.42 \times 10^{-1}}$ & $\mathbf{5.85 \times 10^{-1}}$ & $3.55 \times 10^{-6}$ & $3.40 \times 10^{-6}$ & $0.00$ & $2.80 \times 10^{-4}$ & $3.96 \times 10^{-7}$  \\
		\hline
		& 20 & $\mathbf{3.28 \times 10^{2}}$ & $\mathbf{5.32 \times 10^{3}}$ & $\mathbf{1.32 \times 10^{3}}$ & $7.96 \times 10^{-8}$ & $\mathbf{7.03 \times 10^{2}}$ & $\mathbf{1.96 \times 10^{2}}$ & $\mathbf{5.76 \times 10^{2}}$ & $\mathbf{8.15 \times 10^{2}}$ & $2.98 \times 10^{-7}$ & $3.62 \times 10^{-8}$  \\
		$F_{6}$& 30 & $\mathbf{4.16 \times 10^{7}}$ & $\mathbf{1.40 \times 10^{10}}$ & $\mathbf{8.30 \times 10^{3}}$ & $1.67 \times 10^{-7}$ & $\mathbf{3.45 \times 10^{3}}$ & $\mathbf{3.34 \times 10^{2}}$ & $\mathbf{6.24 \times 10^{3}}$ & $\mathbf{5.40 \times 10^{3}}$ & $6.72 \times 10^{-7}$ & $1.09 \times 10^{-7}$  \\
		& 50 & $\mathbf{1.86 \times 10^{12}}$ & $\mathbf{2.22 \times 10^{12}}$ & $\mathbf{3.69 \times 10^{4}}$ & $\mathbf{3.68 \times 10^{3}}$ & $\mathbf{1.25 \times 10^{5}}$ & $\mathbf{6.39 \times 10^{2}}$ & $\mathbf{1.35 \times 10^{4}}$ & $\mathbf{3.12 \times 10^{4}}$ & $8.01 \times 10^{-7}$ & $2.28 \times 10^{-7}$  \\
		& 100 & $\mathbf{6.78 \times 10^{16}}$ & $\mathbf{1.11 \times 10^{17}}$ & $\mathbf{1.88 \times 10^{6}}$ & $\mathbf{1.22 \times 10^{5}}$ & $\mathbf{3.30 \times 10^{5}}$ & $\mathbf{1.11 \times 10^{4}}$ & $\mathbf{8.41 \times 10^{5}}$ & $\mathbf{3.45 \times 10^{5}}$ & $9.37 \times 10^{-6}$ & $1.25 \times 10^{-6}$  \\
		\hline
		& 20 & $\mathbf{5.00 \times 10^{1}}$ & $\mathbf{7.00 \times 10^{1}}$ & $\mathbf{6.73 \times 10^{1}}$ & $\mathbf{7.52 \times 10^{1}}$ & $\mathbf{6.19 \times 10^{1}}$ & $\mathbf{4.15 \times 10^{1}}$ & $\mathbf{4.48 \times 10^{1}}$ & $\mathbf{6.27 \times 10^{1}}$ & $2.15 \times 10^{-5}$ & $9.92 \times 10^{-6}$  \\
		$F_{7}$& 30 & $\mathbf{1.27 \times 10^{2}}$ & $\mathbf{9.66 \times 10^{1}}$ & $\mathbf{1.15 \times 10^{2}}$ & $\mathbf{1.27 \times 10^{2}}$ & $\mathbf{1.10 \times 10^{2}}$ & $\mathbf{9.18 \times 10^{1}}$ & $\mathbf{1.25 \times 10^{2}}$ & $\mathbf{1.04 \times 10^{2}}$ & $5.70 \times 10^{-5}$ & $2.45 \times 10^{-5}$  \\
		& 50 & $\mathbf{1.80 \times 10^{2}}$ & $\mathbf{2.50 \times 10^{2}}$ & $\mathbf{2.39 \times 10^{2}}$ & $\mathbf{2.73 \times 10^{2}}$ & $\mathbf{2.19 \times 10^{2}}$ & $\mathbf{2.03 \times 10^{2}}$ & $\mathbf{2.40 \times 10^{2}}$ & $\mathbf{2.39 \times 10^{2}}$ & $1.99 \times 10^{-4}$ & $7.77 \times 10^{-5}$  \\
		& 100 & $\mathbf{5.67 \times 10^{2}}$ & $\mathbf{5.98 \times 10^{2}}$ & $\mathbf{6.38 \times 10^{2}}$ & $\mathbf{7.21 \times 10^{2}}$ & $\mathbf{5.42 \times 10^{2}}$ & $\mathbf{5.74 \times 10^{2}}$ & $\mathbf{5.97 \times 10^{2}}$ & $\mathbf{6.48 \times 10^{2}}$ & $1.30 \times 10^{-3}$ & $4.28 \times 10^{-4}$  \\
		\hline
		& 20 & $\mathbf{6.65 \times 10^{2}}$ & $\mathbf{4.65 \times 10^{2}}$ & $1.41 \times 10^{-5}$ & $1.23 \times 10^{-5}$ & $\mathbf{1.68 \times 10^{2}}$ & $\mathbf{8.81}$ & $9.50 \times 10^{-3}$ & $1.82 \times 10^{-5}$ & $3.94 \times 10^{-5}$ & $2.16 \times 10^{-5}$  \\
		$F_{8}$& 30 & $\mathbf{2.16 \times 10^{3}}$ & $\mathbf{1.97 \times 10^{3}}$ & $4.43 \times 10^{-5}$ & $3.90 \times 10^{-5}$ & $\mathbf{2.97 \times 10^{2}}$ & $\mathbf{1.22 \times 10^{1}}$ & $\mathbf{1.48 \times 10^{-1}}$ & $6.63 \times 10^{-5}$ & $1.56 \times 10^{-4}$ & $7.17 \times 10^{-5}$  \\
		& 50 & $\mathbf{8.51 \times 10^{3}}$ & $\mathbf{8.00 \times 10^{3}}$ & $5.76 \times 10^{-4}$ & $5.44 \times 10^{-4}$ & $\mathbf{3.51 \times 10^{1}}$ & $\mathbf{1.47 \times 10^{1}}$ & $\mathbf{1.67}$ & $2.55 \times 10^{-2}$ & $1.27 \times 10^{-3}$ & $6.75 \times 10^{-4}$  \\
		& 100 & $\mathbf{5.03 \times 10^{4}}$ & $\mathbf{5.72 \times 10^{4}}$ & $2.46 \times 10^{-2}$ & $2.28 \times 10^{-2}$ & $\mathbf{3.43 \times 10^{1}}$ & $\mathbf{2.01 \times 10^{1}}$ & $\mathbf{1.29 \times 10^{2}}$ & $\mathbf{9.64}$ & $2.52 \times 10^{-2}$ & $2.51 \times 10^{-2}$  \\
		\hline
		& 20 & $\mathbf{5.36 \times 10^{3}}$ & $\mathbf{3.88 \times 10^{3}}$ & $\mathbf{2.49 \times 10^{2}}$ & $4.01 \times 10^{-8}$ & $\mathbf{1.32 \times 10^{2}}$ & $\mathbf{9.06 \times 10^{1}}$ & $\mathbf{2.27 \times 10^{-1}}$ & $1.70 \times 10^{-7}$ & $1.71 \times 10^{-7}$ & $3.35 \times 10^{-8}$  \\
		$F_{9}$& 30 & $\mathbf{3.46 \times 10^{4}}$ & $\mathbf{4.24 \times 10^{4}}$ & $\mathbf{1.10 \times 10^{3}}$ & $5.53 \times 10^{-8}$ & $\mathbf{3.22 \times 10^{2}}$ & $\mathbf{3.29 \times 10^{2}}$ & $\mathbf{3.13 \times 10^{1}}$ & $\mathbf{4.95 \times 10^{-1}}$ & $3.01 \times 10^{-7}$ & $5.10 \times 10^{-8}$  \\
		& 50 & $\mathbf{2.46 \times 10^{5}}$ & $\mathbf{2.74 \times 10^{5}}$ & $\mathbf{8.42 \times 10^{3}}$ & $1.19 \times 10^{-7}$ & $\mathbf{1.94 \times 10^{3}}$ & $\mathbf{1.43 \times 10^{3}}$ & $\mathbf{5.30 \times 10^{2}}$ & $\mathbf{9.73 \times 10^{1}}$ & $5.80 \times 10^{-7}$ & $1.10 \times 10^{-7}$  \\
		& 100 & $\mathbf{3.25 \times 10^{6}}$ & $\mathbf{3.81 \times 10^{6}}$ & $\mathbf{1.42 \times 10^{5}}$ & $3.26 \times 10^{-7}$ & $\mathbf{1.68 \times 10^{4}}$ & $\mathbf{1.29 \times 10^{4}}$ & $\mathbf{4.97 \times 10^{3}}$ & $\mathbf{1.41 \times 10^{3}}$ & $1.25 \times 10^{-6}$ & $3.96 \times 10^{-7}$  \\
		\hline
		& 20 & $\mathbf{3.76 \times 10^{4}}$ & $\mathbf{3.59 \times 10^{4}}$ & $5.06 \times 10^{-8}$ & $2.42 \times 10^{-8}$ & $\mathbf{1.90 \times 10^{3}}$ & $\mathbf{9.17}$ & $1.04 \times 10^{-7}$ & $1.03 \times 10^{-7}$ & $2.02 \times 10^{-7}$ & $1.77 \times 10^{-8}$  \\
		$F_{10}$& 30 & $\mathbf{2.59 \times 10^{5}}$ & $\mathbf{2.64 \times 10^{5}}$ & $6.43 \times 10^{-8}$ & $2.70 \times 10^{-8}$ & $\mathbf{1.90 \times 10^{4}}$ & $\mathbf{6.07 \times 10^{1}}$ & $1.41 \times 10^{-7}$ & $1.61 \times 10^{-7}$ & $2.75 \times 10^{-7}$ & $2.96 \times 10^{-8}$  \\
		& 50 & $\mathbf{2.22 \times 10^{6}}$ & $\mathbf{2.73 \times 10^{6}}$ & $\mathbf{1.10 \times 10^{5}}$ & $2.94 \times 10^{-8}$ & $\mathbf{2.56 \times 10^{5}}$ & $\mathbf{3.26 \times 10^{2}}$ & $2.27 \times 10^{-7}$ & $2.23 \times 10^{-7}$ & $4.11 \times 10^{-7}$ & $4.94 \times 10^{-8}$  \\
		& 100 & $\mathbf{3.33 \times 10^{7}}$ & $\mathbf{4.35 \times 10^{7}}$ & $\mathbf{1.72 \times 10^{6}}$ & $4.71 \times 10^{-8}$ & $\mathbf{6.27 \times 10^{6}}$ & $\mathbf{5.01 \times 10^{3}}$ & $\mathbf{3.22 \times 10^{5}}$ & $4.40 \times 10^{-7}$ & $5.47 \times 10^{-7}$ & $9.44 \times 10^{-8}$  \\
		\bottomrule[1pt]
		\hline
	\end{tabular}
\end{table*}

\subsubsection{Terminate at maximum function evaluations}
This section evaluates the proposed effective STAs against other algorithms for the sensor network localization (SNL) problem, using a fixed budget of maximum function evaluations (maxFEs). Following the formulation in \cite{zhou2018dynamic}, the SNL problem is described as follows: Given $m$ anchor points $\bm a_1, \dots, \bm a_m \in \mathbb{R}^d$ ($d=2$ in this study) and a radio range $r_d$, let $N_x = \{(i,j): \|\bm x_i - \bm x_j\| = d_{ij} \leq r_d\}$ denote the set of measurable sensor-sensor distances, and $N_a = \{(i,k): \|\bm x_i - \bm a_k\| = \bar{d}_{ik} \leq r_d\}$ denote the set of measurable sensor-anchor distances. The goal is to determine the positions of $n$ distinct sensor points $\bm x_i \in \mathbb{R}^2$, for $i = 1, \dots, n$, that satisfy the distance constraints for all pairs in $N_x$ and $N_a$ as follows
\addtocounter{equation}{1}
\begin{align}\label{eqwsn}
\|\bm x_i - \bm x_j\|^2 \!=\! d_{ij}^2, \forall (i,j) \in N_x, \tag{\theequation a}\\
\|\bm x_i - \bm a_k\|^2 \!=\! \bar{d}_{ik}^2, \forall (i,k) \in N_a. \tag{\theequation b}
\end{align}

Employing the nonlinear least squares method, the SNL problem can be reformulated as the following nonconvex optimization problem:
\begin{equation}
\min \sum_{(i,j) \in N_x} (\|\bm x_i \!-\! \bm x_j\|^2 \!-\! d_{ij}^2)^2 \!+\! \sum_{(i,k) \in N_a} (\|\bm x_i \!-\! \bm a_k\|^2 \!-\! \bar{d}_{ik}^2)^2.
\end{equation}

In the experiments, the SNL problem with 100 sensors is considered. The maximum number of function evaluations (maxFEs) is set to $10^4 \times D$, where $D = 2n = 200$. The search domain is restricted to the interval $[0,1]$. Each algorithm is independently executed 30 times. The statistical results are summarized in Table \ref{tab:snlproblem}, while the best solution obtained by each algorithm is illustrated in Fig. \ref{fig:all_WSN}, where green circles denote the true sensor positions and red asterisks represent the estimated positions.

It is observed that CMA-ES, ESTA, and EXSTA consistently achieve the global best solution with high precision, but CMA-ES requires more computational time.
That is to say, the trade-off between computational cost and efficiency still exists. It is observed that ESTA occasionally incurs high computational time due to its parameter settings, while EXSTA's computation time is competitive with that of its competitors. Notably, EXSTA achieves the very high accuracy while requiring significantly less computational time. In contrast, GWO performs poorly on this problem.

\begin{table*}[!htbp]
	\centering
	\caption{Statistical Results of Different Algorithms on the SNL Problem}
	\label{tab:snlproblem}
	\tiny 
	\begin{tabular}{l|c|c|c|c|c|c|c|c|c|c}
		\hline
		\toprule[1pt]
		Index & GL25 & CLPSO & CMA-ES & LSHADE & GWO & WOA & PGA & KLA & ESTA & EXSTA \\
		\hline
		Best    & $2.26 \times 10^{-1}$ & $5.09 \times 10^{-4}$ & $1.48 \times 10^{-28}$ & $1.92 \times 10^{-8}$ & $2.25 \times 10^{1}$ & $4.30 \times 10^{-2}$ & $1.15 \times 10^{-2}$ & $6.41 \times 10^{-6}$ & $6.41 \times 10^{-15}$ & $1.87 \times 10^{-18}$ \\
		Mean    & $6.07 \times 10^{-1}$ & $7.30 \times 10^{-4}$ & $1.62 \times 10^{-28}$ & $1.28 \times 10^{-6}$ & $2.72 \times 10^{1}$ & $4.16 \times 10^{-1}$ & $1.68$ & $4.72 \times 10^{-5}$ & $4.58 \times 10^{-14}$ & $1.51 \times 10^{-17}$ \\
		Worst   & $1.07$ & $1.90 \times 10^{-3}$ & $1.79 \times 10^{-28}$ & $4.44 \times 10^{-6}$ & $3.36 \times 10^{1}$ & $9.39 \times 10^{-1}$ & $5.27$ & $1.60 \times 10^{-4}$ & $1.10 \times 10^{-13}$ & $4.11 \times 10^{-17}$ \\
		Time(s) & 82.6       & 75.1       & 440.3      & 86.2       & 78.4       & 65.4       & 68.6       &  87.9      & 84.5       & 68.3       \\
		\bottomrule[1pt]
		\hline
	\end{tabular}
\end{table*}

\begin{figure*}[!htbp]
	\centering
	
	\makebox[\textwidth][c]{%
		\subfloat[]{\includegraphics[width=0.32\textwidth]{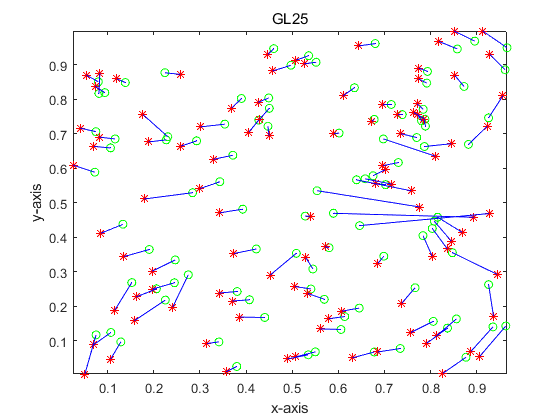}\label{fig:wsnGL25}}
		\subfloat[]{\includegraphics[width=0.32\textwidth]{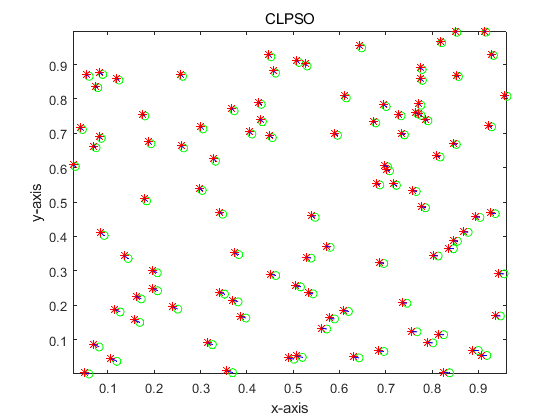}\label{fig:wsnCLPSO}}
		\subfloat[]{\includegraphics[width=0.32\textwidth]{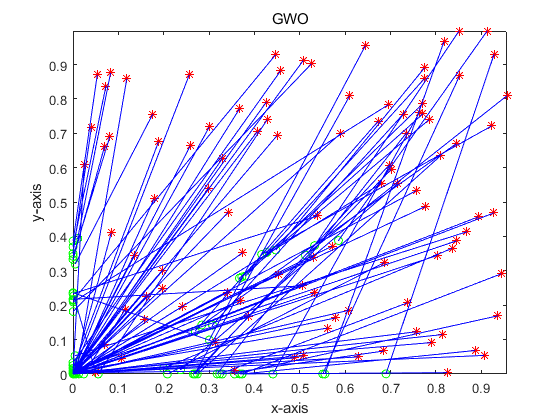}\label{fig:wsnGWO}}
	}

    \vspace{-1cm}
	
	\makebox[\textwidth][c]{%
		\subfloat[]{\includegraphics[width=0.32\textwidth]{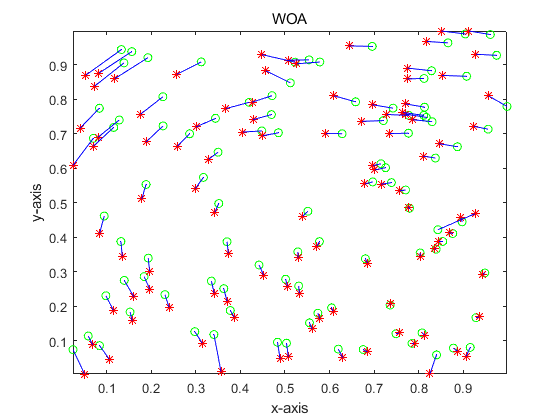}\label{fig:wsnWOA}}
		\subfloat[]{\includegraphics[width=0.32\textwidth]{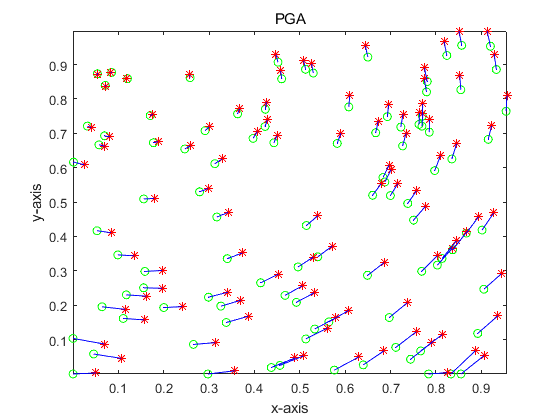}\label{fig:wsnPGA}}
		\subfloat[]{\includegraphics[width=0.32\textwidth]{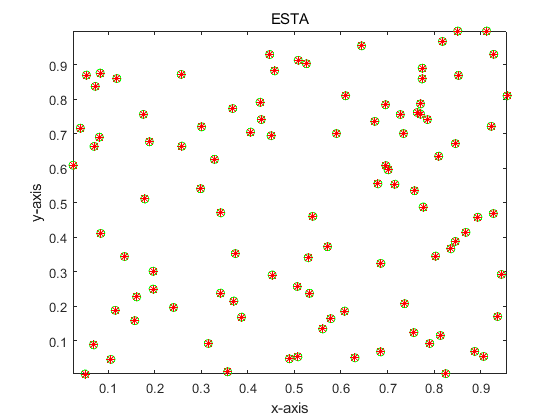}\label{fig:wsnESTA}}
	}

	\caption{Best results obtained by different algorithms for the SNL problem.}
	\label{fig:all_WSN}
\end{figure*}

\section{Conclusion}
This study proposes efficient STAs with guaranteed optimality for system engineering optimization. First, novel translation transformations based on predictive models are designed to generate more promising candidate solutions, thereby significantly accelerating convergence. Second, parameter control strategies are developed to enhance search efficiency. Finally, a dedicated termination criterion is designed to enable convergence to optimality. Comparative studies with both well-established and recently developed metaheuristics demonstrate the effectiveness and superiority of the proposed approach. Notably, even with a relatively small sample size (\emph{SE} = 30), the proposed ESTA and EXSTA are capable of achieving high-precision solutions close to the global optimum. It is also observed that ESTA may occasionally converge to local optima on low-dimensional Griewank functions, where near-global solutions are located far from the true global optimum, making it difficult to escape local minima.

It should be noted that the proposed ESTA occasionally yields locally optimal results on the low-dimensional Griewank function, as the near-global solution in the Griewank function is located far from the true global solution. In such cases, the proposed ESTA faces difficulty in escaping local minima.
Nevertheless, the proposed approach, along with the majority of existing studies on STA, primarily focuses on individual-based and sampling-based methods.
Considering that the population-based algorithms can explore different regions of the search space simultaneously, massively reducing the chance of getting trapped in a single local optimum, in the future, population-based STA will be further explored to enhance its global search capabilities. In the meanwhile, more real-world applications will be investigated to further validate the effectiveness and applicability of the proposed method.


\begin{appendices}
\section{The benchmark functions}
This appendix summarizes the benchmark functions adopted in the experiments, including their mathematical formulations and properties.

\begin{table*}[!htbp]
\caption{The Used Benchmark Functions}
\label{tab:ESTA_benchmarks}
\centering
\begin{tabular}{l|l|c|c|c}
\hline
\toprule[1pt]
\rowcolor{gray!20}
Name & Function & Range & $x^{*}$ & $F(x^{*})$ \\
\hline
shifted  Sphere      & $\begin{array}{l} F_1(\bm x) = f_1(\bm x- \bm s),\;\;\;\; \bm s = 1:n \\
	f_1(\bm x)= \displaystyle\sum_{i=1}^n x_{i}^{2} \end{array}$                              & [0, 2$n$] & [1,$\cdots$,$n$] & 0\\
\hline
\rowcolor{gray!20}
shifted Rosenbrock   & $\begin{array}{l} F_2(x) = f_2(x-s),\;\;\;\; s = 1:n \\
f_2(\bm x)= \displaystyle\sum_{i=1}^n (100(x_{i+1}-x_{i}^2)^2 + (x_{i}-1)^2) \end{array} $ & [0, 2$n$]   & [2,$\cdots$,$n+1$] & 0\\
\hline
shifted Rastrigin   & $\begin{array}{l} F_3(x) = f_3(x-s),\;\;\;\; s = 1:n \\
	 f_3(\bm x) =  \displaystyle\sum_{i=1}^n(x_{i}^{2}-10\cos(2 \pi x_{i})+10)\end{array}$    & [0, 2$n$]   & [1,$\cdots$,$n$] & 0\\
\hline
\rowcolor{gray!20}
shifted  Griewank    & $\begin{array}{l} F_4(x) = f_4(x-s),\;\;\;\; s = 1:n \\
	 f_4(\bm x) = \frac{1}{4000} \displaystyle \sum_{i=1}^n x_{i}^{2}-  \prod_{i=1}^{n} \cos(\frac{x_{i}}{\sqrt{i}}) + 1 \end{array}$ &
[0, 2$n$]   & [1,$\cdots$,$n$] & 0 \\
\hline
shifted Ackley     & $\begin{array}{l} F_5(x) = f_5(x-s),\;\;\;\; s = 1:n \\
	f_5(\bm x) = 20 + e - 20\exp\Big( -0.2  \scriptstyle{\sqrt{\frac{1}{n} \displaystyle \sum_{i=1}^n x_{i}^2}}\Big)-\exp\Big(\frac{1}{n} \displaystyle \sum_{i=1}^n \cos(2\pi x_{i})\Big)  \end{array}$ & [0,  2$n$]   & [1,$\cdots$,$n$] & 0\\
\hline
\rowcolor{gray!20}
shifted Zakharov    &  $\begin{array}{l} F_6(x) = f_6(x-s),\;\;\;\; s = 1:n \\
f_6(\bm x) = \displaystyle \sum_{i=1}^{n} x_i^2 + \Big( \displaystyle \sum_{i=1}^{n} 0.5 i x_i \Big)^2 + \Big( \displaystyle \sum_{i=1}^{n} 0.5 i x_i \Big)^4 \end{array} $ & [0,  2$n$]   & [1,$\cdots$,$n$] & 0 \\
\hline
Michalewicz & $F_7(\bm x) = -\displaystyle\sum_{i=1}^n \sin(x_{i}) \sin( \frac{i x_{i}^2}{\pi})^{20}$ & [0, $\pi$]   & unknown & unknown \\
\hline
\rowcolor{gray!20}
Trid        & $F_8(\bm x) = \displaystyle\sum_{i=1}^n (x_i - 1)^2 -\displaystyle\sum_{i=2}^n x_i x_{i-1} + \frac{n(n+4)(n-1)}{6}$ & $[0, n^2]$ & $x_i^{*} = i(n+1-i)$ & 0\\
\hline
shifted Schwefel 1.2 &  $\begin{array}{l} F_9(\bm x) = f_9(\bm x- \bm s),\;\;\;\; \bm s = 1:n \\
f_9(\bm x)= \displaystyle\sum_{i=1}^{n} \Big(\sum_{j=1}^{i} x_j \Big)^2 \end{array}$
& [0, 2$n$]   & [1,$\cdots$,$n$] & 0\\
\hline
\rowcolor{gray!20}
shifted Schwefel 2.4 & $\begin{array}{l} F_{10}(\bm x) = f_{10}(\bm x- \bm s),\;\;\;\; \bm s = 1:n \\
f_{10}(\bm x) =  \displaystyle\sum_{i=1}^{n} (x_i - 1)^2 + (x_1 - x_i^2)^2 \end{array}$ & [0, 2$n$]   & [2,$\cdots$,$n+1$] & 0 \\
\hline
shifted Schwefel 2.21 & $\begin{array}{l} F_{11}(\bm x) = f_{11}(\bm x- \bm s),\;\;\;\; \bm s = 1:n \\
f_{11}(\bm x) = \max \{ |x_i|, 1 \leq i \leq n \} \end{array} $ & [0, 2$n$]   & [1,$\cdots$,$n$] & 0 \\
\hline
\rowcolor{gray!20}
shifted Schwefel 2.22 & $\begin{array}{l} F_{12}(\bm x) = f_{12}(\bm x- \bm s),\;\;\;\; \bm s = 1:n \\
f_{12}(\bm x)  =  \displaystyle\sum_{i=1}^n |x_i| + \prod_{i=1}^n |x_i| \end{array} $ & [0, 2$n$]   & [1,$\cdots$,$n$] & 0 \\
\hline
Schwefel 2.26 & $F_{13}(\bm x) =   \displaystyle\sum_{i=1}^n -x_i \sin(\sqrt{|x_i|}) + 418.9829n$ & [-500, 500]   & [420.9687,$\cdots$] & 0 \\
\hline
\rowcolor{gray!20}
shifted Penalized1   & $\begin{array}{l} F_{14}(\bm x) = f_{14}(\bm x- \bm s),\;\;\;\; \bm s = 1:n \\
\begin{array}{l}  f_{14}(\bm x) = \frac{\pi}{n} \biggl\{ 10 \sin (\pi y_1) + \displaystyle\sum_{i=1}^{n-1} (y_i - 1)^2 \left[ 1 + 10 \sin^2 (\pi y_{i+1}) \right]  \\
  + (y_n - 1)^2 \biggr\} + \displaystyle \sum_{i=1}^{n} u(x_i, 10, 100, 4), y_i = 1 + \frac{x_i + 1}{4}\\
 u(x_i, a, k, m) =
\begin{cases}
k(x_i - a)^m, & x_i > a \\
0, & -a \leq x_i \leq a \\
k(-x_i - a)^m, & x_i < -a
\end{cases}
 \end{array} \end{array}$ &   [0, 2$n$]   & [0,$\cdots$,$n-1$] & 0 \\
\hline
shifted Penalized2   & $\begin{array}{l} F_{15}(\bm x) = f_{15}(\bm x- \bm s),\;\;\;\; \bm s = 1:n \\
\begin{array}{l}  f_{15}(\bm x) = 0.1 \left\{ \sin^2(3\pi x_1) + \displaystyle \sum_{i=1}^{n-1} (x_i - 1)^2 \left[1 + \sin^2(3\pi x_{i+1})\right] \right\} + \\
(x_n - 1)^2 \left[1 + \sin^2(2\pi x_n)\right] + \displaystyle  \sum_{i=1}^n u(x_i, 5, 100, 4) \end{array} \end{array}$ &   [0, 2$n$]   & [2,$\cdots$,$n+1$] & 0 \\
\bottomrule[1pt]
\hline
\end{tabular}
\end{table*}

\end{appendices}

\bibliographystyle{IEEEtran}
\bibliography{zxj}

\vfill

\end{document}